\definecolor{Royalblue}{cmyk}{1,0.30,0.2,0.2}
\definecolor{purple}{rgb}{0.62, 0.0, 0.77}
\newcommand{\luci}{\color{black}}
\newcommand{\nd}{\noindent}
\newcommand{\Tr} {\mbox{\rm tr}}
\newcommand{\bmat}{\left[ \begin{matrix}}
\newcommand{\emat}{\end{matrix} \right]}
\newcommand{\beq}{\begin{equation}}
\newcommand{\eeq}{\end{equation}}
\newcommand{\tp}{^{\top}}
\newcommand{\bea}{\begin{eqnarray}}
\newcommand{\eea}{\end{eqnarray}}
\newcommand\numberthis{\addtocounter{equation}{1}\tag{\theequation}}
\newcommand{\E}{{\mathbb E}\,}
\newcommand{\Rbb}{\mathbb R}
\newcommand{\Cbb}{\mathbb C}
\newcommand{\Zbb}{\mathbb Z}
\newcommand{\Nbb}{\mathbb N}
\newcommand{\Tbb}{\mathbb T}
\newcommand{\yb}{\mathbf  y}
\newcommand{\zb}{\mathbf  z}
\newcommand{\eb}{\mathbf  e}
\newcommand{\ub}{\mathbf  u}
\newcommand{\tb}{\mathbf  t}
\newcommand{\ssb}{\mathbf  s}
\newcommand{\kb}{\mathbf  k}
\newcommand{\Mb}{\mathbf M}
\newcommand{\Nb}{\mathbf N}
\newcommand{\Rb}{\mathbf R}
\newcommand{\Sb}{\mathbf S}
\newcommand{\Wb}{\mathbf W}
\def\sigmab{\boldsymbol{\sigma}}
\def\omegab{\boldsymbol{\omega}}
\def\alphab{\boldsymbol{\alpha}}
\def\Phib{\boldsymbol{\Phi}}
\def\thetab{\boldsymbol{\theta}}
\def\Deltab{\boldsymbol{\Delta}}
\def\zerob{\boldsymbol{0}}
\newtheorem{theorem}{Theorem}
\newtheorem{remark}{Remark}
\begin{document}

\begin{frontmatter}

\title{Mean-square consistency of the $f$-truncated $\text{M}^2$-periodogram }

\author[Unipd]{Lucia Falconi}\ead{lucia.falconi@phd.unipd.it},
\author[Unipd]{Augusto Ferrante}\ead{augusto@dei.unipd.it},
\author[Unipd]{Mattia Zorzi}\ead{zorzimat@dei.unipd.it}
     
\address[Unipd]{Department of Information Engineering, University of Padova, Via Gradenigo 6/B, 35131 Padova, Italy} 
      
\begin{keyword}  
Consistency, Periodogram, Spectral estimation                                              
\end{keyword}

\begin{abstract}                          
	The paper deals with the problem of estimating the M$^2$ (\textit{i.e.} multivariate and multidimensional) spectral density function of a stationary random process or random field. 
	We propose the $f$-truncated periodogram, \textit{i.e.} a truncated periodogram where the truncation point is a suitable function $f$ of the sample size.
	We discuss the asymptotic consistency of the estimator and we provide three concrete problems that can be solved using the proposed approach. Simulation results show the effectiveness of the procedure. 
\end{abstract}

\end{frontmatter}

\section{Introduction} 

The problem of estimating the spectral density of a stationary random process
from a finite length record of observed data is of paramount importance in control, signal processing and time-series analysis. 
Just to highlight a few of the countless important applications where spectral estimation plays a key role,
we mention the problem of estimating the impulse response of a SISO LTI system through ETFE, \cite{Ljung_SI,SODERSTROM_STOICA_1988}, the reconstruction of the topology of dynamic network, \cite{ID_DAHLHAUS,Avventi_ARMA,alpago2021scalable,zorzi2020autoregressive} and targets detection
using radar signals, \cite{Engels2017,Engels2014}.
Spectral estimation has been extensively studied and many different approaches, both parametric and non parametric, have been proposed;
we refer the reader to  \cite{stoica2005spectral}  for an overview of the literature and a rich list of references.

The most basic and widely used estimation method is the periodogram that can be regarded as the classical mean for spectral estimation.  Even more complex and refined procedures such as the {\em THREE} and {\em THREE-like} methods start from  a coarse estimate often given by the periodogram, see \cite{byrnes2000new,FERRANTE_TIME_AND_SPECTRAL_2012,Hellinger_Ferrante_Pavon,BETA,georgiou2006,zhu_SIAM} and the references therein.
The statistical properties of the periodogram have been extensively studied over the years. It is well known that the periodogram provides an asymptotically unbiased estimate of the underlying spectrum, but it has some problems: indeed, the periodogram is an inconsistent spectral estimator\footnote{Recall that an estimator is said to be  consistent  if the mean square error of the estimator tends to zero as the sample size tends to infinity.} so that, contrarily to what one would expect, the quality of the periodogram estimate does not get better as the number of samples increases.  
Furthermore, the periodogram values at adjoining frequencies are asymptotically  uncorrelated, so that the estimate exhibits an erratic behavior.
Finally, a crucial issue is enforcing positivity of the estimated spectral density which can be achieved by the so-called biased version of the periodogram. The latter, however, forces the rank of the estimated  spectral density to be equal to one which is a serious problem in many situations where multivariate processes are considered both in the unidimensional and in the multidimensional case.

A traditional way of dealing with these problems is to smooth the periodogram across frequencies.
The first reference to the idea of smoothing the periodogram to obtain a better spectral estimate appears to be \cite{daniell1946discussion}
that proposed to reduce the large variance of the basic periodogram estimator by averaging the periodogram over small intervals centered on the current frequency.
Since then, several methods to smooth the periodogram have been considered \cite{Bartlett1948}, \cite{Bartlett1955}, \cite{Blackman1958}. 

A viable and simple strategy to smooth the periodogram consists in truncating the sum in its definition formula,
\emph{i.e.} in windowing the periodogram  with a rectangular weight function.
The effect of the truncation is to reduce the variance of the estimator at the price of a higher bias. Hence,
a proper choice of the truncation point is crucial to correctly balance the tradeoff between bias and variance. This choice, however,  is far from being obvious.

In this paper, the interest is on the class of discrete-time, second-order, stationary, Gaussian, possibly multivariate and multidimensional random processes.  
For this class of models, we propose a simple procedure for spectral estimation with asymptotic performance guarantees. It consists of building a truncated periodogram and choosing the truncation point as a suitable function 
of the sample size. We prove that, under the only assumption that the spectral density of the underlying process has absolutely summable Fourier coefficients,   our truncation method provides an estimator which is mean-square consistent both when 
applied with the  unbiased and biased sample covariances.
In the interest of notation simplicity, the proofs are given for real-valued signals; the generalization to the complex case is straightforward.

Results in the same vein have been discussed in the past literature, mostly dealing with scalar and unidimensional random processes (see for example \cite{grenander1953, Lomnicki1957, Parzen1957}).
In particular, we refer the reader to the classical textbook \cite{priestley1982spectral} for a comprehensive review of the results.
Priestley considers a general class of windowed periodogram and analyzes its asymptotic statistical properties by working with a representation of the estimator as a weighted integral of the full periodogram.  Consistency is guaranteed provided that the impulse response of the filter generating the process in question decays to zero strictly faster than $ \frac{1}{t^c}$ with $c>2$.  
The result is then extended to multidimensional, but scalar, case. \\
In our paper, focusing the attention to rectangular window functions, we provide a unified, direct and conceptually simple and self-contained proof of the consistency of the $f$-truncated periodogram that holds for multidimensional and multivariate random processes
under the weaker assumption that the filter generating the process is only BIBO stable.
We also show that, at the price of considering consistence in the $L_2$ sense
(instead of pointwise), our result continues to hold under the even weaker assumption
that the impulse response of the filter generating the process has finite energy.
Our approach is different from \cite{priestley1982spectral} as we work directly with the expression (19) of the spectral estimator and  we then analyze its consistency by studying the sampling properties of the {\luci sample} autocovariances.

The paper is organized as follows. Separate discussions are given for unidimensional random processes (Section \ref{sec:1d}) and multidimensional random fields (Section \ref{sec:multidim}). Although the methodology is the same, the unidimensional case is analyzed to better understand the paradigm as well as to highlight its importance in practice through some examples: in {\luci Subsection } \ref{sec:ETFE} we consider the impulse response estimation problem for a SISO system; in {\luci Subsection }  \ref{sec:NET} we consider the problem of learning a dynamic Gaussian graphical model. We also provide an example for the multidimensional case, that is the target parameter estimation problem in automotive radars, see {\luci Subsection } \ref{sec:RAD}. 
Finally, Section \ref{sec:conc} concludes the paper.

\vspace{\baselineskip}

\nd{\em Notations:} 
In the following, $\E$ denotes the mathematical expectation, $\Zbb$ the set of integers, $\Rbb$ the real line and $\Cbb$ the complex plane.
Given a matrix $\Mb$, $\Mb\tp$ is its transpose and $M_{ij} $ is the element of $M$ in the $i$-th row and $j$-th column.  
The notation $(\cdot)^*$ means the complex conjugate transpose. {\luci If $\Mb$ is a square matrix, $\Tr(\Mb)$ denotes its trace.}
The symbol $\Vert \cdot \Vert$ may denote both the Frobenius norm of a matrix or the  $L_1$-norm {\luci of} a function depending on the context.

\section{Unidimensional case} \label{sec:1d}
Suppose that we have the $m$-valued, zero-mean, stationary random process
\beq \label{eq:model_1d} 
\yb (t) = \sum_{\sigma = 0}^{\infty} \Mb (\sigma) \eb (t - \sigma), \qquad t \in  \Zbb,
\eeq 
where $\eb = \{ \eb(t), \;  t\in \Zbb \}$ is normalized white Gaussian noise of dimension $p.$ The impulse response $ \Mb (\cdot) : \Zbb \to \Rbb^{m \times p} $ is such that $ \sum_{\sigma = -\infty}^{\infty} \Vert \Mb (\sigma ) \Vert = \sum_{\sigma = 0}^{\infty} \Vert \Mb (\sigma ) \Vert  < \infty, $
meaning that the system is causal and BIBO stable, which is the case in most practical applications. \\
\begin{remark}
	In \eqref{eq:model_1d} we have assumed that the system  generating the process $\yb (t)$ is causal, i.e. $\Mb (\sigma) =0$ for all $\sigma<0$, because this is the most common situation in the unidimensional case, where $t$ typically represents the time variable. However, this restriction is not necessary: in fact the asymptotic consistency of the proposed spectral estimator is proved in Section \ref{sec:multidim} for possibly non-causal systems. In particular, our results still hold when the system 
	is not causal and the sum in \eqref{eq:model_1d} ranges from $-\infty$ to $+\infty$.
\end{remark} 
It is well-known that the autocovariance sequence 
$ \Rb_k := \E \yb (t + k) \yb (t)\tp $
depends only on the lag $k$ and it enjoys the property $\Rb_k = \Rb_{-k}\tp. $ 
Define the spectrum of the process as the Fourier transform of the covariance function: 
\beq \label{eq:spectrum_1d}
\Phib (e^{i \theta}) : =  \sum_{k= -\infty}^{k=\infty} \Rb_k
e^{-i  \theta k  } , \quad \theta \in [0, 2\pi).
\eeq
Then, a simple estimator of the spectrum is:
\beq \label{eq:periodogram_1d}
\hat \Phib (e^{i \theta}) : =  \sum_{k=-n}^{k=n} \hat \Rb_k e^{-i \theta k}
\eeq
where $\hat \Rb_k$ is an estimate of the covariance lag $\Rb_k$ obtained from the available
sample  $ \{  \yb (t), t = 1 , ... , N \}. $ 
There are two standard ways to obtain the sample covariances required in \eqref{eq:periodogram_1d}: 
the unbiased estimate
\beq \label{eq:unbiasedACS}
\hat \Rb_k := \frac{1}{N-k} \sum_{t=1}^{N-k} \yb (t + k) \yb (t)\tp \quad k=0,\dots, n,
\eeq
and the biased sample covariance
\beq \label{eq:biasedACS}
\hat \Rb_k := \frac{1}{N} \sum_{t=1}^{N-k} \yb (t + k) \yb (t)\tp \quad k=0,\dots, n.
\eeq

Next, we address the fundamental problem of selecting the parameter $n$ in \eqref{eq:periodogram_1d}. 
A typical choice is $n=N-1$, in which case the estimator \eqref{eq:periodogram_1d} is called the periodogram. 
The main problem with the periodogram lies in its large variations about the true spectrum, even for very large data samples. 
This effect can be reduced by truncating the periodogram, \emph{i.e.} by choosing $n < N-1.$ 
We may expect that the smaller the $n$, the larger the reduction in variance and the lower the resolution. Hence, the choice of $n$ should be based on a trade-off between spectral resolution and statistical variance. Here, we propose to select the length $n$ of the truncated periodogram \eqref{eq:periodogram_1d} as a function $f(N)$ of the numerosity $N$ of the data sample. Clearly, the performance  of the obtained estimator is determined by the chosen function $f;$ to stress this dependence the estimator will hereafter be referred to as $f$-truncated periodogram.
If 
\beq \label{eq:n}
n :=   f(N),
\eeq
where $f$ is a function taking values in $\Nbb$ such that 
\begin{align}
	\lim_{N \to \infty}  f(N) &= \infty,	\label{eq:limit_1} \\
	\lim_{N \to \infty}  \frac{f(N)^2}{N} &= 0	\label{eq:limit},
\end{align} 
then the following theorem can be stated:
\begin{theorem} \label{th:cons_1d}
	Given any stochastic  process   $\yb$ of the form \eqref{eq:model_1d},
	the $f$-truncated periodogram \eqref{eq:periodogram_1d} with $n$ defined by \eqref{eq:n}-\eqref{eq:limit_1}-\eqref{eq:limit} and $\hat \Rb_k$ estimated through  \eqref{eq:unbiasedACS} or \eqref{eq:biasedACS} is a uniformly mean-square consistent estimator of the spectral density, that is
	$$ 
	\lim_{N \to \infty} \; \E  \Vert \Deltab (e^{i \theta}) \Vert^2  = 0  \quad \text{uniformly over } [0,2\pi),
	$$
	where $ \Deltab(e^{i \theta}) := \Phib(e^{i \theta}) - \hat \Phib (e^{i \theta}).  $ 
\end{theorem}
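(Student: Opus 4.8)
The plan is to split $\Deltab(e^{i\theta})$ into a deterministic truncation bias and a random estimation term and to bound each uniformly in $\theta$, so that the uniform convergence follows at once. Writing $\mathbf{B}_n(e^{i\theta}):=\sum_{|k|>n}\Rb_k e^{-i\theta k}$ and $\mathbf{V}_n(e^{i\theta}):=\sum_{|k|\le n}(\Rb_k-\hat\Rb_k)e^{-i\theta k}$ we have $\Deltab=\mathbf{B}_n+\mathbf{V}_n$, so Minkowski's inequality in $L_2(\Omega)$ gives
\beq
\left(\E\Vert\Deltab(e^{i\theta})\Vert^2\right)^{1/2}\;\le\;\Vert\mathbf{B}_n(e^{i\theta})\Vert\;+\;\left(\E\Vert\mathbf{V}_n(e^{i\theta})\Vert^2\right)^{1/2}.
\eeq
The bias term is immediate: BIBO stability of $\Mb(\cdot)$ forces $\sum_k\Vert\Rb_k\Vert\le\big(\sum_\sigma\Vert\Mb(\sigma)\Vert\big)^2<\infty$, hence $\Vert\mathbf{B}_n(e^{i\theta})\Vert\le\sum_{|k|>n}\Vert\Rb_k\Vert$ is the tail of a convergent series, does not depend on $\theta$, and vanishes as $n=f(N)\to\infty$ by \eqref{eq:limit_1}.

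For the random term, a second application of Minkowski yields $\big(\E\Vert\mathbf{V}_n\Vert^2\big)^{1/2}\le\sum_{|k|\le n}\big(\E\Vert\hat\Rb_k-\Rb_k\Vert^2\big)^{1/2}$, again a bound free of $\theta$ since $|e^{-i\theta k}|=1$ and $\hat\Rb_k-\Rb_k$ is real. It therefore suffices to establish a mean-square bound on the sample-covariance error that is \emph{uniform in the lag}: there is a constant $C$, depending only on the process, such that for the unbiased estimate \eqref{eq:unbiasedACS} one has $\E\Vert\hat\Rb_k-\Rb_k\Vert^2\le C/N$ for all $|k|\le n$ and all $N$ large enough (so that $N-k\ge N/2$, which holds eventually since $f(N)/N\to0$ by \eqref{eq:limit_1}--\eqref{eq:limit}), while for the biased estimate \eqref{eq:biasedACS} the same holds up to an extra additive term $(k/N)^2\Vert\Rb_k\Vert^2$. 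Granting this, the subadditivity $\sqrt{a+b}\le\sqrt a+\sqrt b$ gives
\beq
\sum_{|k|\le n}\big(\E\Vert\hat\Rb_k-\Rb_k\Vert^2\big)^{1/2}\;\le\;(2n+1)\sqrt{C/N}\;+\;\frac{n}{N}\sum_k\Vert\Rb_k\Vert\;\longrightarrow\;0,
\eeq
the first summand because $(2n+1)^2/N\sim 4f(N)^2/N\to0$ by \eqref{eq:limit} and the second because $f(N)/N\to0$ (in the unbiased case this summand is absent). Combined with the bias estimate, this proves $\E\Vert\Deltab(e^{i\theta})\Vert^2\to0$ uniformly over $[0,2\pi)$.

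The hard part is the uniform-in-$k$ variance bound, and this is where Gaussianity enters. For the unbiased estimate, $\E\Vert\hat\Rb_k-\Rb_k\Vert^2=\sum_{a,b}\Var\big((\hat\Rb_k)_{ab}\big)$, and each term is an $(N-k)^{-2}$-weighted double sum over $t,s\in\{1,\dots,N-k\}$ of the covariance of the products $y_a(t+k)y_b(t)$ and $y_a(s+k)y_b(s)$. Isserlis' (Wick's) theorem collapses this covariance to $(\Rb_{t-s})_{aa}(\Rb_{t-s})_{bb}+(\Rb_{t-s+k})_{ab}(\Rb_{t-s-k})_{ba}$; grouping the double sum by the difference $\tau=t-s$, which is attained at most $N-k$ times, bounds $\Var\big((\hat\Rb_k)_{ab}\big)$ by $\frac{1}{N-k}\sum_{\tau}\big[|(\Rb_\tau)_{aa}(\Rb_\tau)_{bb}|+|(\Rb_{\tau+k})_{ab}(\Rb_{\tau-k})_{ba}|\big]$. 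Both series are finite by Cauchy--Schwarz and absolute summability, and the second one is bounded \emph{uniformly in $k$}: after shifting the summation index it is at most $\big(\sum_\sigma(\Rb_\sigma)_{ab}^2\big)^{1/2}\big(\sum_\sigma(\Rb_\sigma)_{ba}^2\big)^{1/2}$, which no longer involves $k$, and $\sum_\sigma(\Rb_\sigma)_{ij}^2\le\big(\sum_\sigma\Vert\Rb_\sigma\Vert\big)^2<\infty$. Summing over $a,b$ gives $\E\Vert\hat\Rb_k-\Rb_k\Vert^2\le C/(N-k)\le 2C/N$. Finally, since the biased sample covariance equals $\frac{N-k}{N}$ times the unbiased one, the biased error splits into $\frac{N-k}{N}$ times the mean-zero unbiased fluctuation plus the deterministic bias $-\frac{k}{N}\Rb_k$, which accounts for the extra $(k/N)^2\Vert\Rb_k\Vert^2$ term; the bounds for negative lags follow from $\hat\Rb_{-k}-\Rb_{-k}=(\hat\Rb_k-\Rb_k)\tp$ and the invariance of $\Vert\cdot\Vert$ under transposition.
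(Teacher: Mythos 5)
Your proof is correct, and its skeleton coincides with the paper's: the same decomposition of $\Deltab$ into a deterministic truncation tail (your $\mathbf{B}_n$, the paper's $\Sb_2$) plus a zero-mean estimation term (your $\mathbf{V}_n$, the paper's $\Sb_1$), the same use of $\sum_k\Vert\Rb_k\Vert\le\big(\sum_\sigma\Vert\Mb(\sigma)\Vert\big)^2<\infty$ to kill the tail, and the same central lemma $\E\Vert\hat\Rb_k-\Rb_k\Vert^2\le C/(N-k)$ uniformly in $k$, combined with \eqref{eq:limit} through a $(2n+1)^2/N$ factor (your double Minkowski versus the paper's Cauchy--Schwarz on the double sum over $\kb_a,\kb_b$ are interchangeable here). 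Where you genuinely diverge is in the derivation of that central lemma: the paper substitutes the moving-average representation \eqref{eq:model} into $\hat\Rb_\kb$ and evaluates the fourth moments of the driving white noise $\eb$ directly, enumerating the admissible index pairings, whereas you apply Isserlis' theorem to the Gaussian process $\yb$ itself, write $\Cov\big(y_a(t+k)y_b(t),\,y_a(s+k)y_b(s)\big)$ in terms of the autocovariances $\Rb_{t-s}$, $\Rb_{t-s\pm k}$, and group by the difference $\tau=t-s$. Your route is shorter and makes transparent that what is really consumed is joint Gaussianity of $\yb$ together with $\{\Rb_k\}\in\ell_1$; the paper's noise-level computation is heavier but is the one that generalizes verbatim to the multidimensional field of Section \ref{sec:multidim} and, as the subsequent Remark exploits, degrades gracefully when BIBO stability is relaxed to $\sum_\sigma\Vert\Mb(\sigma)\Vert^2<\infty$. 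Your treatment of the biased case --- factoring \eqref{eq:biasedACS} as $\frac{N-k}{N}$ times \eqref{eq:unbiasedACS} and isolating the deterministic bias $-\frac{k}{N}\Rb_k$ --- is also a cleaner (one-dimensional) version of the paper's binomial-coefficient bound in Theorem \ref{th:cons_bias}.
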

The proof is deferred to Section \ref{sec:multidim}, where the result is stated in the more general setting of a multidimensional and multivariate random field (see Theorem  \ref{th:cons_unbias} for the case in which unbiased estimates of the covariance sequence are considered and Theorem \ref{th:cons_bias} for the biased estimates case).

\begin{remark}
When studying the linear relation between two jointly stationary signals $\yb = \{ \yb(t), \;  t\in \Zbb \}$ and $\ub = \{ \ub(t), \;  t\in \Zbb \}$, it is sometimes useful to estimate the cross spectrum $$ \Phib_{\yb \ub}(e^{i \theta}) := \sum_{k=-\infty}^{\infty} \Rb_{\yb \ub,k} e^{-i\theta k} $$
where $ \Rb_{\yb \ub,k} = \E \yb(t+k)\ub(t)\tp.$ \\
To this end, define $ \zb (t) := [ \yb(t)\tp \; \; \ub(t)\tp]\tp$ and let $\hat\Phib_\zb $ denote the estimator of the spectral density $\Phib_\zb.$
If we partition $\hat\Phib_\zb $ as 
\beq \label{eq:joint_spectrum} 
\hat\Phib_\zb = \begin{bmatrix} 
	\hat\Phib_{\yb} & \hat\Phib_{\yb \ub}   \\
	\hat\Phib^*_{\yb \ub} & \hat\Phib_{\ub}   \end{bmatrix}, \eeq
we immediately see that an estimate of the cross-spectrum $\Phib_{\yb \ub}$ may be obtained from the corresponding block in $\hat\Phib_\zb.$ Moreover, it is clear that the mean square consistency of $\hat \Phib_\zb$ implies the  mean-square consistency of the cross-spectrum estimate obtained from \eqref{eq:joint_spectrum}.
\end{remark}

In the following, we give examples of two problems that can be solved with the proposed approach. 


\subsection{Impulse response estimation} \label{sec:ETFE}
The proposed $f$-truncated periodogram can be used for the reconstruction of a SISO system with impulse response  $ g(t), \; t\in \Zbb,$ with a non-parametric frequency-domain technique. 
The identification problem consists of estimating the unknown function $g$ starting from a known input  $u$  and $N$ noisy measurements
\beq \label{eq:etfe_model} 
y (t) = \sum_{\sigma = 0}^{\infty} g(\sigma) u(t - \sigma) + v(t) \qquad t=1,... ,N,
\eeq
where $v = \{ v(t), \; t \in \Zbb \}$ is white noise.

We proceed by first estimating the transfer function $G(e^{i\theta})$ of the system. The latter task is traditionally performed by means of the Empirical Transfer Function Estimate (ETFE), see \cite[chap. 6.4]{Ljung_SI}:
\beq \label{eq:Getfe}
\hat G_{ETFE} (e^{i\theta}) = \frac{Y_N(e^{i\theta}) }{U_N(e^{i\theta}) },
\eeq
where $Y_N $ and $U_N $ are the Fourier transforms of the input and output sequences, respectively. However, in most practical situations the ETFE estimate is not accurate even for large data samples, since its variance does not decrease with $N$ and the estimates at different frequencies are asymptotically uncorrelated. In the extreme case, it may happen that $U_N(e^{i\theta})$ is zero for certain values of $\theta$ making the {\luci ratio} in (\ref{eq:Getfe}) not well defined in that frequencies. Notice that these properties are closely related to those of periodogram estimates of spectra.
As an alternative to overcome these issues, one can consider the smoothed version of the ETFE: 
\beq \label{eq:etfe_G1}
\hat G_1 (e^{i\theta})  = \frac{ \hat \Phi_{{yu}_1}(e^{i\theta}) }{ \hat \Phi_{{u}_1}(e^{i\theta}) },
\eeq
where the spectral estimates $\hat \Phi_{u_1}$ and $\hat \Phi_{yu_1}$ are the $f$-truncated periodograms corresponding to $n=f_1(N)$ where $f_1$ satisfies the assumptions \eqref{eq:limit_1}-\eqref{eq:limit}. It is also worth noting that, under the assumption that the true spectrum of the input $u$ is coercive, the consistency property guarantees that the probability that denominator of (\ref{eq:etfe_G1}) is different from zero pointwise tends to 1 as  $N$ approaches infinity.\\
Then, given an estimate $\hat G (e^{i\theta})$ of the transfer function of the system, we easily take the representation of the signal back to the time domain through the inverse Discrete Time Fourier Transform.

As a benchmark we consider the reconstruction of the infinite-dimensional system with impulse response
$$ g(t) = \left(1 + 25 \left( \frac{t-20}{20}\right)^2 \right)^{-1} \quad  t = 0,1,2, \dots  $$
which is a translated and scaled version of the widely-known Runge function.
The standard deviation of the measurement noise $ v $ is $1\%$ of the maximum absolute value of the generated noiseless output samples. 
We assume that the system is initially at rest and we use the output of the low-pass filter $ H(z) = \frac{z + 0.35}{z-0.45} $ fed by a normalized white Gaussian noise as test input.
Given a data sample of numerosity $N$, we compute the transfer function estimate \eqref{eq:etfe_G1}
where  the covariance lags are estimated by \eqref{eq:biasedACS} and $ f_1(N) = \lfloor  \sqrt[3] N \rfloor.$
For the sake of comparison, we also consider the estimate 
$$\hat G_2 (e^{i\theta}) = \frac{\hat\Phi_{{yu}_2}(e^{i\theta})}{ \hat\Phi_{{u}_2}(e^{i\theta})},$$ where $\hat \Phi_{u_2} $ and $ \hat \Phi_{yu_2} $ are now the periodograms truncated to $ f_2(N) = \lfloor 0.01 N \rfloor$, and the raw ETFE estimate \eqref{eq:Getfe} as implemented  in the MATLAB System Identification Toolbox.
We hasten to remark that the function $f_2(N)$ does not satisfies the assumptions of Theorem \ref{th:cons_1d}. \\
For each value of the data size $N$, we perform a Monte Carlo simulation of $50$ trials. In each trial we randomly generate  the data sample $u$ and $y$.  We define the relative reconstruction error at the $j$-th MC run as
$$
err_j := \sqrt{\frac { \sum_{t=0}^{300} (g(t) - \hat g (t))^2 } {\sum_{t=0}^{300} g(t)^2  } },
$$
where $\hat g$ in one of the previous impulse response estimates, 
and the average identification error as
$$
\overline{err} : = \frac{\sum_{j=1}^{50} err_j}{50}.
$$
The results of the simulations are summarized in Figure \ref{fig:etfe_error}, \ref{fig:etfe_error_box} and \ref{fig:etfe_tf}:
Figure \ref{fig:etfe_error} plots the average error $\overline{err}$ of the three different estimators as a function of the data size $N$, 
Figure \ref{fig:etfe_error_box} the boxplots of the relative reconstruction errors for different values of $N$, 
and finally Figure  \ref{fig:etfe_tf} shows a typical realization of the absolute value of transfer function estimates $\hat G $, for $N = 2 \cdot 10^7$. 
The figures reveal that $\hat G_1 $ asymptotically outperforms the other two estimators. As a matter of fact, its relative reconstruction error approaches zeros as $N$ grows to infinity, in line with the fact that $\hat \Phi_{u_1} $ and $ \hat \Phi_{yu_1}$ are mean-square consistent spectral estimators. On the other hand, we have no theoretical guarantees on the asymptotic behavior of  $\hat \Phi_{u_2} $ and $ \hat \Phi_{yu_2};$ moreover, as already known, the raw ETFE generally provides a very poor estimate of the underlying system.

\begin{figure}
	\centering
	\includegraphics[width=\linewidth]{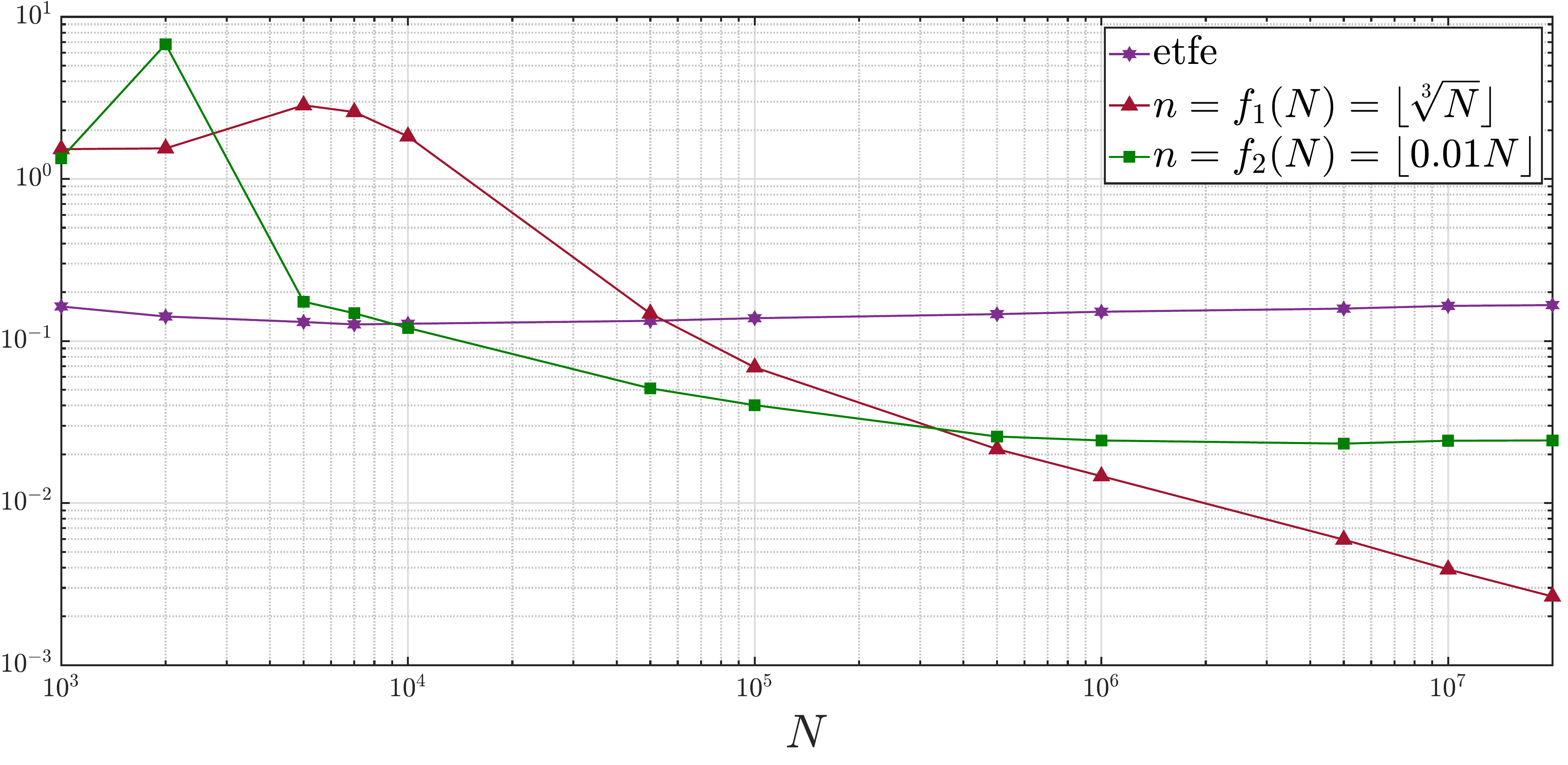}
	\caption{Impulse response estimation problem: comparison of the average identification error $\overline{err}$ of $\hat g_i (t)$, $i = 1,2,3$, for increasing values of $N$.  }
	\label{fig:etfe_error}
\end{figure}

\begin{figure}
	\centering
	\includegraphics[width=\linewidth]{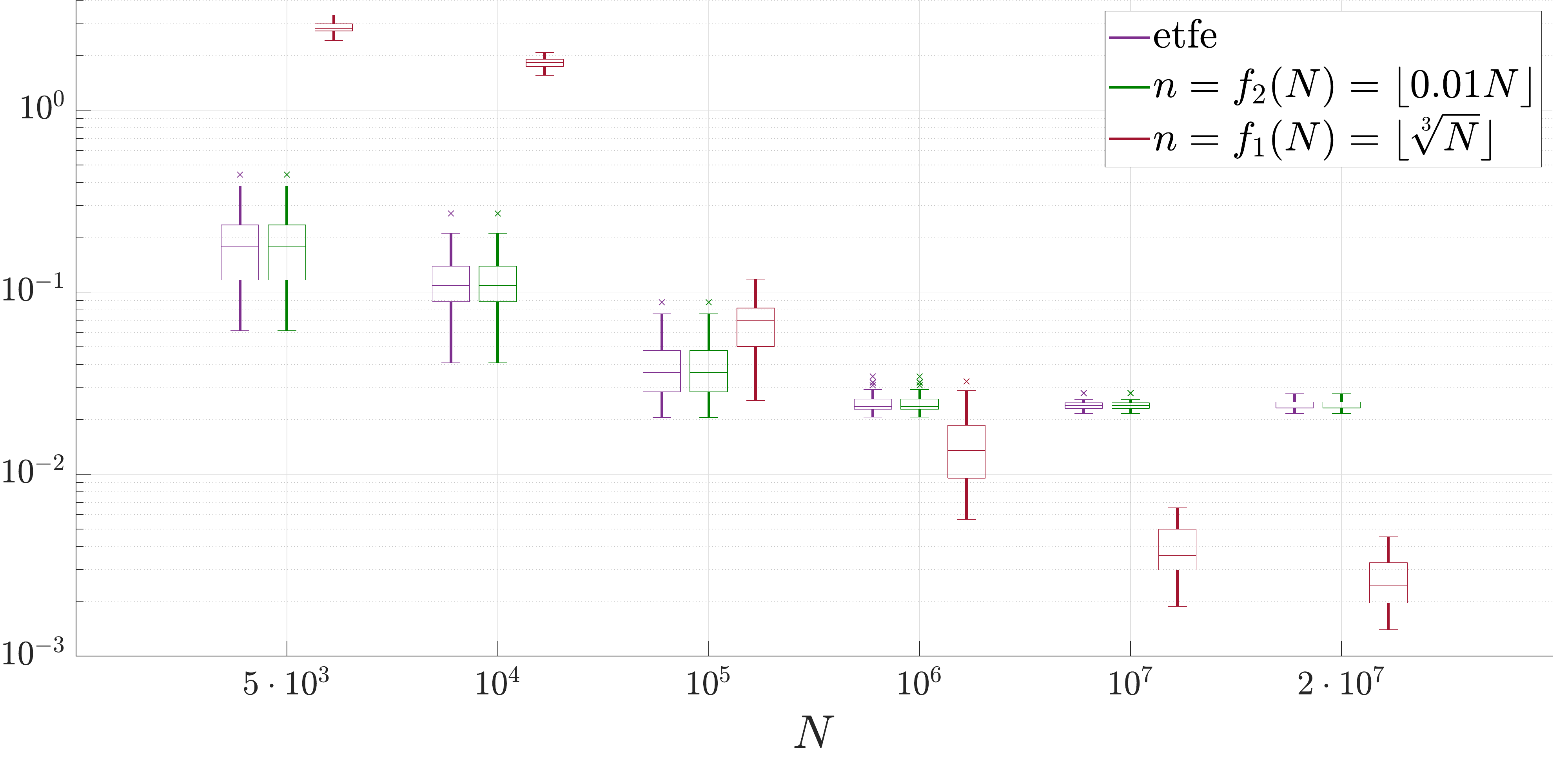}
	\caption{Impulse response estimation problem: comparison of the boxplots of the identification error  of $\hat g_i (t)$, $i = 1,2,3$, for the 50 MonteCarlo simulations for increasing values of $N$.  }
	\label{fig:etfe_error_box}
\end{figure} 

\begin{figure}
	\centering
	\includegraphics[width=\linewidth]{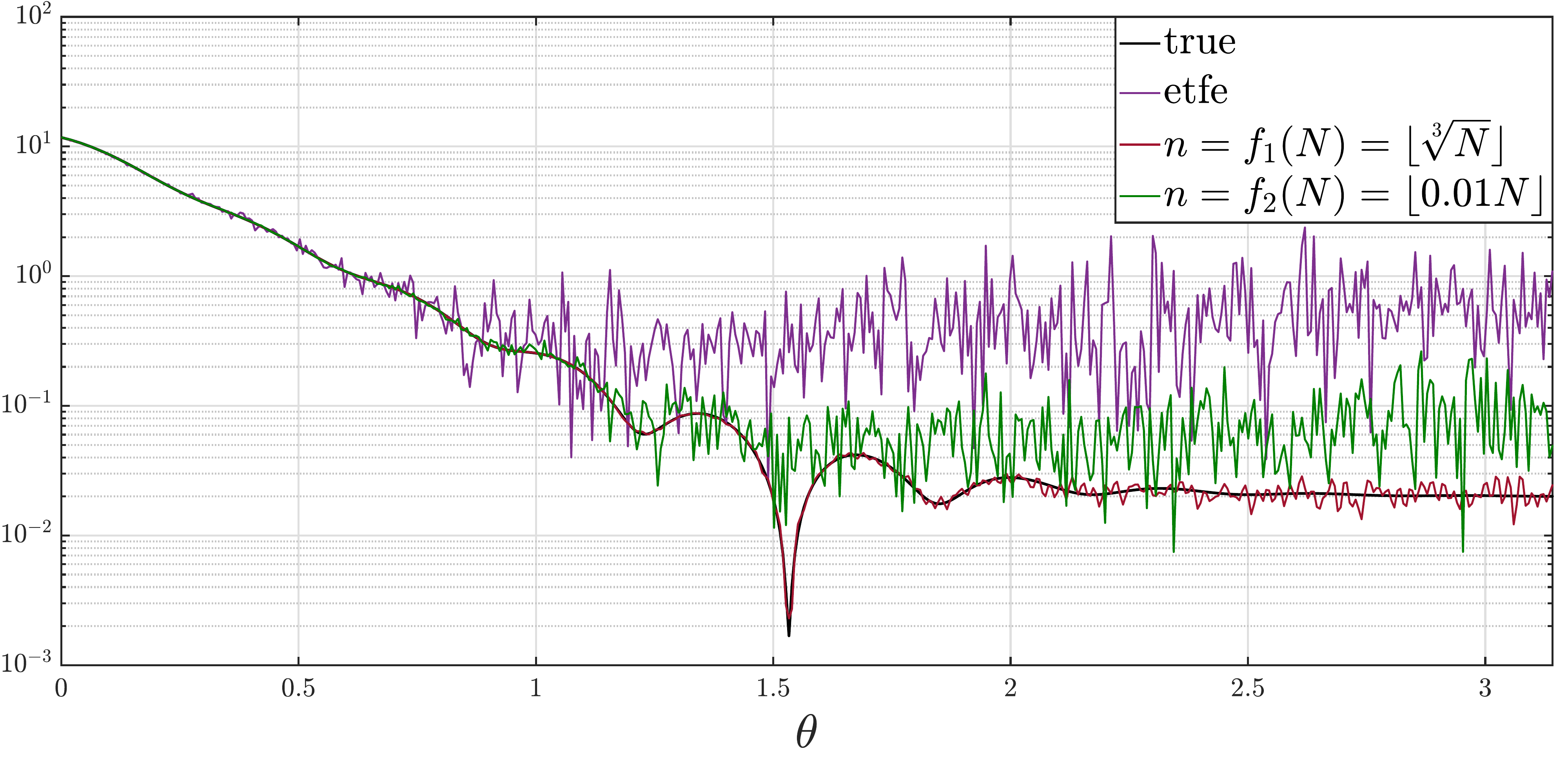}
	\caption{Impulse response estimation problem: absolute value of the true transfer function (thick black line) and one realization (out of the 50) of the transfer function estimators $\hat G_i (e^{i\theta})$, $i = 1,2,3,$ for $N = 2 \cdot 10^7.$}
	\label{fig:etfe_tf}
\end{figure}

\subsection{Learning undirected graphical model} \label{sec:NET}
In many practical situations we have a large number of signals acquired by sensors  and it is crucial to extract information on the relationships between the data.
These situations can be analyzed through graphical interaction models. Given a zero-mean, stationary, Gaussian random process $\yb$ of dimension $m$, a graphical model of the process is an undirected graph with $m$ nodes, one for each component  $y_i$, and an edge connecting the nodes $i$ and $j$ if and only if the components $y_i$ and $y_j$ are conditionally dependent given all the other components\footnote{ We say that $y_i$ and $y_j$ are independent, conditional on all the other $y_k$, if for all $t,s \in \Zbb,$ the random variables $y_i(t)$ and $y_j(s)$ are conditionally independent given the random variables in the closure of the space linearly generated by $\{ y_k(\tau), 1\leq k\leq m, \; k \neq i, \; k \neq j, \; \tau \in \Zbb \}, $ \cite{Avventi_ARMA}.}. 
The conditional independence property has a simple characterization in terms of the spectrum of the process: $y_i$ and $y_j$ are independent, conditional on all the other $y_k$, if and only if  $ \Phi_{ij}^{-1} (e^{i \theta}) = 0 $ for all $\theta$, \cite{ID_DAHLHAUS}. 

In order to estimate the topology of a network we may proceed as follows. 
First, we collect a sample of numerosity $N$ from the model
and we compute an estimate $\hat \Phib $ of the spectrum. Then, the topology of the network is determined by thresholding the estimated inverse spectrum $\hat \Phib^{-1}:$ we assume that an edge connects the nodes $i$ and $j$ if and only if $\Vert \hat \Phi^{-1}_{ij} \Vert\geq c $ where $c$ is the selected threshold. 
To estimate the spectrum one can compute the full (unwindowed) periodogram obtained with (\ref{eq:biasedACS}). Notice however, that the latter  cannot be inverted because it is singular for any $\theta \in [0, 2\pi)$  even in the case that the underlying true spectrum is coercive and the sample size $N$ is large, see e.g. \cite{9029655}. 
This problem is usually even more severe if we take the unbiased periodogram with \eqref{eq:unbiasedACS}; in fact,
in this case the estimate is not even guaranteed positive semi-definite and, if 
the underlying true spectrum is close to singularity for some values of $\theta$, 
this problem remains even when the sample size increases because of  the erratic behaviour of the 
periodogram estimate. 
To overcome the aforementioned issues we propose to consider the $f$-truncated spectral estimator: under the assumption that the underlying true spectrum is coercive, by the  consistency property, it follows that the probability that the $f$-truncated estimator is positive definite pointwise tends to 1 as $N$ approaches infinity.
 
In the following we consider as a specific example the Gaussian process of dimension $m=5$ represented by the graphical model of Figure \ref{fig:network}.
The true inverse power spectral density of the process is 
$ \Phib^{-1} = \Wb^{-*}  \Wb^{-1}, $ where
$$ \Wb^{-1}(z) = \begin{bmatrix}
	\frac{z-0.6}{z+0.5}  & 0 &  0 & \frac{z+0.2}{z-0.3} &  0 \\
	0 &  \frac{z}{z-0.3}  &     \frac{1}{z+0.5} &  0  & 0 \\ 
	0 &  0  & \frac{z+0.7}{z-0.3} &  0   &\frac{z+0.3}{z-0.3}\\ 
	0 &  0  & 0  & \frac{ z+0.1}{z+0.5} & 0 \\
	0 &  0  & 0  & 0  & \frac{z+0.1}{z-0.1} \end{bmatrix}.
$$  
We evaluate the ability to correctly estimate the underlying graphical structure of the following spectral estimators: 
\begin{itemize}
	\item $\hat \Phib_1 (e^{i \theta}):$ the $f_1$-truncated periodogram corresponding to  $f_1(N) = \lfloor \sqrt[3]{N} \rfloor.$ Notice that $f_1$ satisfies the assumption of Theorem \ref{th:cons_1d};
	\item $\hat \Phib_2 (e^{i \theta}):$ $f_2$-truncated periodogram corresponding to $f_2(N) = \lfloor 0.001 N \rfloor.$ Notice that $f_2$ does not satisfy the assumption of Theorem \ref{th:cons_1d};
	\item $\hat \Phib_3 (e^{i \theta}):$ the periodogram smoothed with a rectangular window of fix length $ n = 10.$
\end{itemize}
For all the estimators, the sample covariances $\hat \Rb_k $ are computed by using the formula \eqref{eq:biasedACS}.
We set threshold $c$ to the $ 5 \% $ of the mean value of the norms of the non-zero entries of $ \Phib^{-1},$ that is  $ c = 0.0994,$ and we compare the results of the three approaches. 
In Figure \ref{fig:net_edeges} we analyze the error in the topology selection as a function of the sample size $N$. 
Specifically, we define the topology selection error as the number of misclassified edges in the network and, for each value of $N$, we average this error over $20$ runs of a Monte Carlo simulation. 
Figure \ref{fig:net_Phi} plots the relative estimation error $
\Vert \Phib - \hat \Phib \Vert / \Vert \Phib \Vert,$ where $ \hat \Phib$ is one of the previous spectrum estimates, averaged over the 20 trials for increasing values of $N.$
As expected, both the errors asymptotically go to zero for $\hat \Phib_1,$ which has indeed theoretical guarantees of performance when $N$ tends to infinity; on the other hand, the errors of $\hat \Phib_2 $ and $ \hat \Phib_3,$ after an initial improvement, do not further decrease with $N$.
 \begin{remark}
The fact that the threshold $c$ depends on the true spectral density $\Phib$ appears to be quite artificial.
However, for large values of $N$ nothing changes if we select the threshold with reference
to the estimated  spectral density $ \hat \Phib$.
The real difficulty in the kind of problems like that discussed in this section is to automatically select the threshold in such a way that the asymptotic recovery of the correct topology is guaranteed. 
In our setting this result can be achieved if we assume that $\Phib$ is coercive and that 
we know a lower bound for the rate at which $\|\Rb_k\|$ converges to zero
(for example if $\Phib$ is rational, we know that $\|\Rb_k\|$ converges to zero
faster than $1/k^M$ for any $M>0$).
Indeed, in this case {\luci an} upper bound of the rate of convergence of $\E  \Vert \Deltab (e^{i \theta}) \Vert^2$ can be computed (see the proof of Theorem \ref{th:cons_1d} in Section \ref{sec:multidim}). Then, by means of the continuous mapping Theorem applied to the inverse function and the Markov's inequality, it is possible to select the value $c= g(N) $ of the threshold, where $g$ is a function converging to zero sufficiently slowly, such that the estimated topology converges in probability to the correct topology.
\end{remark} 

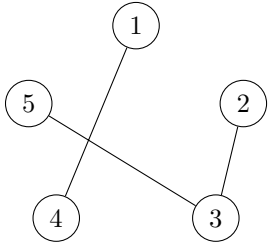
\begin{figure} 
	\centering
	\begin{tikzpicture} 
		\tikzstyle{every node}=[draw,shape=circle];
		\node (v1) at (90 :1.5cm) {$1$};			
		\node (v2) at (90-72: 1.5cm) {$2$};
		\node (v3) at (7*45:1.5cm) {$3$};
		\node (v4) at (5*45:1.5cm) {$4$};
		\node (v5) at (90+72:1.5cm) {$5$};
		\draw (v1) -- (v4)
		(v2) -- (v3)
		(v3) -- (v5);
	\end{tikzpicture}
	\caption{Graphical model of the generative Gaussian random process $\yb$ of dimension $m = 5.$ } \label{fig:network}
\end{figure}
\begin{figure}
	\centering
	\includegraphics[width=\linewidth]{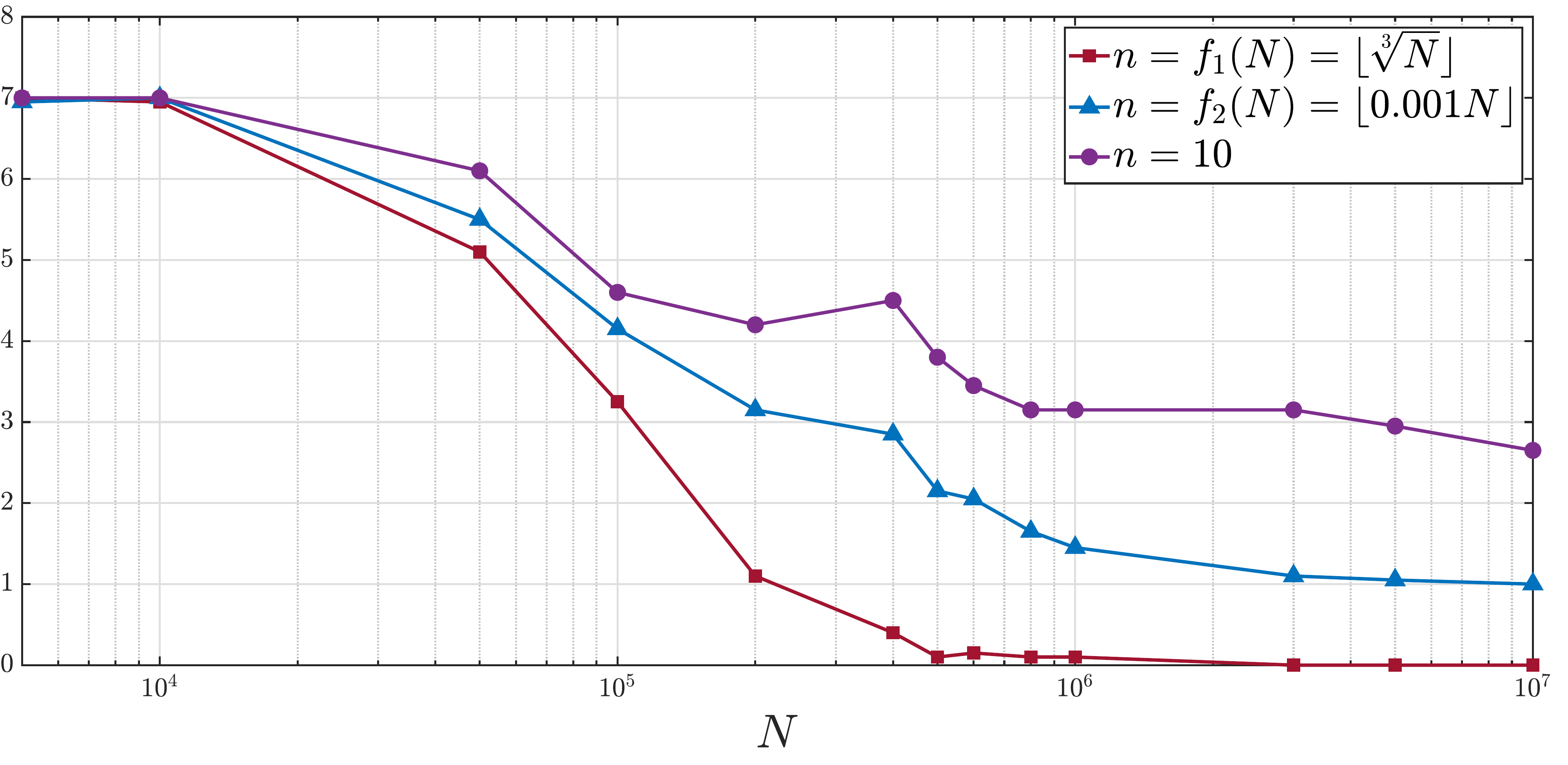}
	\caption{Graphical model: average number of misclassified edges in the estimated graph as a function of the sample size $N.$ Comparison among the different spectral estimators.}
	\label{fig:net_edeges}
\end{figure}
\begin{figure}
	\centering
	\includegraphics[width=\linewidth]{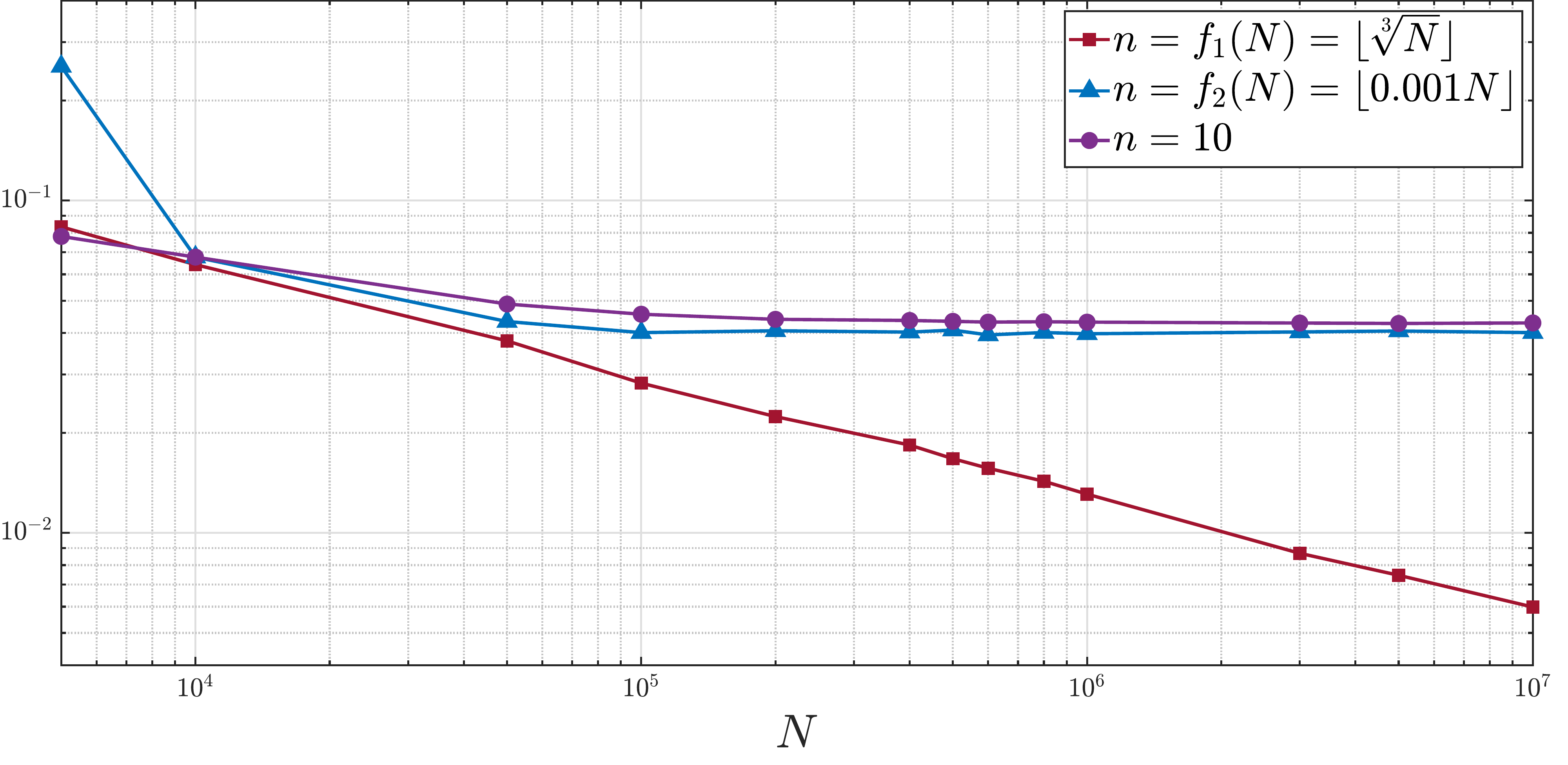}
	\caption{Graphical model: Average relative estimation error $\Vert \Phib - \hat \Phib \Vert / \Vert \Phib \Vert$ as a function of the sample size $N.$ Comparison among the different spectral estimators.}
	\label{fig:net_Phi}
\end{figure}

\section{Multidimensional Random Processes}\label{sec:multidim}
In this section, we extend and prove the consistency of the $f$-truncated periodogram to the class of multidimensional random fields. 
For the sake of clarity, here we first discuss the case in which the $f$-truncated periodogram is built with the unbiased sample covariances and then with the biased covariance estimates.

Consider the second-order stationary  random field
\beq \label{eq:model} 
\yb (\tb) = \sum_{\sigmab \in \Zbb^d} \Mb (\sigmab) \eb (\tb - \sigmab), \qquad \tb = (t_1, \dots, t_d ) \in \Zbb^d,
\eeq 
where the positive integer $d$ is the dimension of the index set, $ \Mb (\cdot) : \Zbb^d \to \Rbb^{m \times p} $ is the impulse response of a BIBO stable filter, \emph{i.e.} 
\beq \label{eq:BIBO}  \sum_{\sigmab \in \Zbb^d} \Vert \Mb (\sigmab ) \Vert  < \infty, \eeq
and $\eb = \{ \eb(\tb),  \tb\in \Zbb^d \}$ is a $p$-dimensional normalized white Gaussian noise. 
Accordingly, $\yb = \{ \yb (\tb), \tb \in \Zbb^d \}$ is a real-valued zero mean random field of dimension $m$. 
The covariance matrix, defined as 
\beq \label{eq:cov} \Rb_\kb := \E \yb (\tb + \kb) \yb (\tb)\tp \eeq
does not depend on $\tb$ by stationary and satisfies the symmetric property $R_\kb = R_{-\kb}\tp. $ 
The spectral density of the random field is the Fourier transform of the matrix field $\{ \Rb_\kb , \kb \in \Zbb^d \},$ \emph{i.e.}
\beq \label{eq:spectrum}
\Phib (e^{i \thetab}) : =  \sum_{\kb \in \Zbb^d} \Rb_\kb
e^{-i \langle \kb,  \thetab \rangle} ,
\eeq
where $\thetab = (\theta_1, \theta_2, \dots,  \theta_d) $ takes values in $ \Tbb^d := [0, 2\pi)^d, $ $e^{i \thetab}$   is a shorthand notation for $ (e^{i \theta_1},  \dots, e^{i \theta_d} ) $ and $  \langle \kb,  \thetab \rangle := k_1 \theta_1 + \dots + k_d \theta_d $ is the usual inner product in $\Rbb^d.$ 

Suppose {we} observe the finite-length realization of the field $ \{  \yb (\tb), 1 \leq t_j \leq N \text{ for } j=1, \dots, d \} $ and we want to estimate the spectrum  from these observations.
The standard unbiased estimate of the covariance sequence is
\beq \label{eq:unbiasedACS_d}
\hat \Rb_\kb := \frac{1}{N_{\kb}} \sum_{\tb \in \Xi_{N,\kb} } \yb (\tb + \kb) \yb (\tb)\tp,
\eeq
where each component of the index $\tb$ satisfies
\beq \label{eq:setXi} \begin{cases}
	1 \leq t_j \leq N - k_j & \text{if } k_j \geq 0 \\
	-k_j +1 \leq t_j \leq N  & \text{if } k_j < 0 \\
\end{cases} \eeq
and hence the set $\Xi_{N,\kb}$ 
$$\Xi_{N,\kb} : = \{ \tb \in \Zbb^d : 
 t_j \text{ satisfies \eqref{eq:setXi} for } j=1,.., d \}
$$
with cardinality $N_\kb  := | \Xi_{N,\kb} | =  (N - |k_1|) ... (N - |k_d|). $ \\
Consider a real function $f$ satisfying the assumptions \eqref{eq:limit_1} and \eqref{eq:limit},
and let $n :=    f(N).$ Then, the $f$-truncated periodogram is defined as
\beq \label{eq:periodogram}
\hat \Phib (e^{i \thetab}) : =  \sum_{\kb \in \Lambda_n} \hat \Rb_\kb e^{-i \langle \kb,  \thetab \rangle}
\eeq
with  $ \Lambda_n := \{ \kb \in \Zbb^d : |k_j| \leq n \text{ for } j=1, \dots, d\} $.
The following theorem guarantees the mean-square consistency of $\hat \Phib:$

\begin{theorem}\label{th:cons_unbias}
Given any random field  $\yb$ of the form \eqref{eq:model}, the $f$-truncated periodogram \eqref{eq:periodogram} with $n$ defined by \eqref{eq:n}-\eqref{eq:limit_1}-\eqref{eq:limit} and $\hat \Rb_k$ estimated through \eqref{eq:unbiasedACS_d} is a uniformly mean-square consistent estimator of the spectral density, that is
$$ 
\lim_{N \to \infty} \E  \Vert \Deltab (e^{i {\luci \thetab} }) \Vert^2  = 0 \quad \quad \text{uniformly over } {\luci \Tbb^d}, 
$$
where $ \Deltab(e^{i {\luci \thetab}}) := \Phib(e^{i {\luci \thetab}}) - \hat \Phib (e^{i {\luci \thetab}}). $	
\end{theorem}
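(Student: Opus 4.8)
The plan is the classical bias/variance decomposition, organized so that every bound is uniform in $\thetab$. Since $\Phib(e^{i\thetab})=\sum_{\kb\in\Zbb^d}\Rb_\kb e^{-i\langle\kb,\thetab\rangle}$ and $\hat\Phib(e^{i\thetab})=\sum_{\kb\in\Lambda_n}\hat\Rb_\kb e^{-i\langle\kb,\thetab\rangle}$, I would write
$$
\Deltab(e^{i\thetab})=\bb_n(\thetab)+\vb_N(\thetab),\qquad
\bb_n(\thetab):=\sum_{\kb\notin\Lambda_n}\Rb_\kb e^{-i\langle\kb,\thetab\rangle},\quad
\vb_N(\thetab):=\sum_{\kb\in\Lambda_n}\bigl(\Rb_\kb-\hat\Rb_\kb\bigr)e^{-i\langle\kb,\thetab\rangle}.
$$
By the elementary bound $\|a+b\|^2\le 2\|a\|^2+2\|b\|^2$ it then suffices to show that $\sup_\thetab\|\bb_n(\thetab)\|^2\to0$ and $\sup_\thetab\E\|\vb_N(\thetab)\|^2\to0$ as $N\to\infty$.

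For the deterministic truncation term I would first record that BIBO stability \eqref{eq:BIBO} forces $\sum_{\kb\in\Zbb^d}\|\Rb_\kb\|<\infty$: writing $\Rb_\kb=\sum_{\sigmab\in\Zbb^d}\Mb(\sigmab)\Mb(\sigmab-\kb)\tp$ and bounding the double sum gives $\sum_\kb\|\Rb_\kb\|\le\bigl(\sum_{\sigmab}\|\Mb(\sigmab)\|\bigr)^2<\infty$. Then, since $|e^{-i\langle\kb,\thetab\rangle}|=1$ and the Frobenius norm is sub-additive, $\|\bb_n(\thetab)\|\le\sum_{\kb\notin\Lambda_n}\|\Rb_\kb\|$ for every $\thetab$, and the right-hand side is the tail of a convergent series, hence it tends to $0$ as $n\to\infty$; by \eqref{eq:limit_1} this occurs as $N\to\infty$, and the bound is independent of $\thetab$.

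For the stochastic term, the same two observations, followed by Cauchy--Schwarz, give
$$
\|\vb_N(\thetab)\|^2\le\Bigl(\sum_{\kb\in\Lambda_n}\|\Rb_\kb-\hat\Rb_\kb\|\Bigr)^2\le|\Lambda_n|\sum_{\kb\in\Lambda_n}\|\Rb_\kb-\hat\Rb_\kb\|^2,\qquad|\Lambda_n|=(2n+1)^d,
$$
uniformly in $\thetab$. Taking expectations and using that \eqref{eq:unbiasedACS_d} is unbiased, the task reduces to bounding $\E\|\Rb_\kb-\hat\Rb_\kb\|^2=\sum_{i,j}\Var\bigl((\hat\Rb_\kb)_{ij}\bigr)$. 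Here I would invoke Gaussianity: with $(\hat\Rb_\kb)_{ij}=\tfrac1{N_\kb}\sum_{\tb\in\Xi_{N,\kb}}y_i(\tb+\kb)y_j(\tb)$, the Isserlis/Wick formula expresses $\Cov\bigl(y_i(\tb+\kb)y_j(\tb),\,y_i(\sbf+\kb)y_j(\sbf)\bigr)$ as a sum of two products of entries of $\Rb_{\tb-\sbf}$ and of $\Rb_{\tb-\sbf\pm\kb}$; substituting $\lb=\tb-\sbf$, for each $\lb$ at most $N_\kb$ pairs $(\tb,\sbf)\in\Xi_{N,\kb}^2$ contribute, so the double sum is $\le N_\kb\sum_{\lb}(\cdots)$ with the surviving sum controlled by $\bigl(\sum_{\kb}\|\Rb_\kb\|\bigr)^2$. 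This yields $\E\|\Rb_\kb-\hat\Rb_\kb\|^2\le C/N_\kb$ with $C$ independent of $\kb$ and $N$. Since $N_\kb=\prod_{j=1}^d(N-|k_j|)\ge(N-n)^d$ for $\kb\in\Lambda_n$ and $n=f(N)=o(N)$ by \eqref{eq:limit}, one has $1/N_\kb\le 2^d/N^d$ for $N$ large, hence
$$
\sup_\thetab\E\|\vb_N(\thetab)\|^2\le(2n+1)^{2d}\,\frac{C\,2^d}{N^d}\le C'\Bigl(\frac{f(N)^2}{N}\Bigr)^d,
$$
which tends to $0$ by \eqref{eq:limit}. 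Combining the two estimates gives the uniform mean-square consistency; note the argument even delivers an explicit rate, $\sup_\thetab\E\|\Deltab(e^{i\thetab})\|^2=O\bigl((f(N)^2/N)^d\bigr)+O\bigl(\bigl(\sum_{\kb\notin\Lambda_n}\|\Rb_\kb\|\bigr)^2\bigr)$.

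The step I expect to be the real obstacle is the Gaussian moment estimate $\E\|\Rb_\kb-\hat\Rb_\kb\|^2\le C/N_\kb$: one must carry out the Wick expansion carefully, keep track of the index shifts by $\pm\kb$, and verify that after summing over the $O(N_\kb^2)$ pairs $(\tb,\sbf)$ exactly one free lattice sum of size $N_\kb$ survives while the remainder is absorbed into $\bigl(\sum_\kb\|\Rb_\kb\|\bigr)^2$, so that the constant $C$ is genuinely uniform in both $\kb$ and $N$. Everything else --- sub-additivity of the Frobenius norm, Cauchy--Schwarz, and the elementary asymptotics driven by \eqref{eq:limit_1}--\eqref{eq:limit} --- is routine.
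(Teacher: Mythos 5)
Your proposal is correct and follows essentially the same route as the paper: the same decomposition of $\Deltab$ into the deterministic truncation tail $\sum_{\kb\notin\Lambda_n}\Rb_\kb e^{-i\langle\kb,\thetab\rangle}$ and the stochastic term $\sum_{\kb\in\Lambda_n}(\Rb_\kb-\hat\Rb_\kb)e^{-i\langle\kb,\thetab\rangle}$, the same observation that BIBO stability gives $\sum_\kb\|\Rb_\kb\|<\infty$, the same key estimate $\E\|\Rb_\kb-\hat\Rb_\kb\|^2\le C/N_\kb$ with $C$ uniform in $\kb$ and $N$, and the same $(2n+1)^{2d}/(N-n)^d$ counting driven by \eqref{eq:limit_1}--\eqref{eq:limit}. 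The only substantive difference is the step you yourself flag: you derive the moment bound by applying the Isserlis/Wick formula directly to the Gaussian field $\yb$, which expresses $\Cov\bigl(y_i(\tb+\kb)y_j(\tb),\,y_i(\sbf+\kb)y_j(\sbf)\bigr)$ as $(R_{\tb-\sbf})_{ii}(R_{\tb-\sbf})_{jj}+(R_{\tb-\sbf+\kb})_{ij}(R_{\tb-\sbf-\kb})_{ji}$, after which the substitution $\lb=\tb-\sbf$ and the bound $\sum_\lb\|\Rb_\lb\|^2\le\bigl(\sum_\lb\|\Rb_\lb\|\bigr)^2$ give the claim cleanly; the paper instead expands $\hat\Rb_\kb$ in terms of the driving white noise $\eb$ and enumerates the coincidence patterns of its fourth moments, arriving at the same inequality. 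Your variant is arguably tidier (fewer index cases to track), the paper's is more elementary in that it never invokes Isserlis by name; both deliver a constant genuinely independent of $\kb$ and $N$, and your use of $\|a+b\|^2\le2\|a\|^2+2\|b\|^2$ in place of the paper's exact cross-term computation costs nothing.
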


\begin{proof}
	By plugging \eqref{eq:model} into \eqref{eq:cov}, it is easy to get
	$$
	\Rb_\kb = \sum_{\sigmab \in \Zbb^d}  \Mb (\sigmab + \kb )  \Mb (\sigmab)\tp
	$$
	and therefore,
	$$ 
			\Vert \Rb_\kb \Vert^2 = \sum_{\sigmab_a, \sigmab_b}  \Tr \big( \Mb (\sigmab_a ) \Mb (\sigmab_a + \kb )\tp  \Mb (\sigmab_b + \kb) \Mb (\sigmab_b)\tp \big) .
	$$
	Notice that the BIBO stability assumption \eqref{eq:BIBO} implies
	\begin{align} \label{eq:ell1}
		\sum_{\kb \in \Zbb^d}& \Vert \Rb_\kb  \Vert \leq  \sum_{\kb, \sigmab \in \Zbb^d} \Vert {\luci \Mb} (\sigmab + \kb )\Vert \; \Vert {\luci \Mb} (\sigmab ) \Vert \nonumber \\
		&= \sum_{\sigmab \in \Zbb^d} \Vert {\luci \Mb} (\sigmab ) \Vert  \sum_{\kb \in \Zbb^d} \Vert {\luci \Mb} (\sigmab + \kb )\Vert  <  \infty  . 
	\end{align}	
	From \eqref{eq:model} and \eqref{eq:unbiasedACS_d}, we immediately obtain
	\begin{multline*}
		\hat \Rb_\kb = \frac{1}{N_{\kb}} \sum_{\tb \in \Xi_{N,\kb} }  \sum_{\sigmab_a, \sigmab_b \in \Zbb^d}  \Mb (\sigmab_a) \eb(\tb + \kb - \sigmab_a) \\ \times \eb(\tb - \sigmab_b)\tp \Mb (\sigmab_b) \tp ,
	\end{multline*}
	and clearly $ \E \hat \Rb_\kb =  \Rb_\kb$ for any $\kb$ and $N$.
	Moreover,
	\begin{align*} 
		 \E  \Vert   \hat \Rb_\kb  \Vert^2  & = \E \Bigg[  \Tr \Big(   \frac{1}{\luci N_{\kb}^2 } \sum_{\tb_a, \tb_b} \sum_{\substack{\sigmab_a,\sigmab_b, \\ \sigmab_c,\sigmab_d}} 
		\Mb(\sigmab_b) \eb(\tb_a - \sigmab_b) \\ 
		& \times \eb(\tb_a + \kb -\sigmab_a)\tp  \Mb(\sigmab_a) \tp  \Mb (\sigmab_c) \\
		& \times  \eb(\tb_b + \kb - \sigmab_c) \eb(\tb_b - \sigmab_d)\tp \Mb (\sigmab_d) \tp \Big)  \Bigg] \\
		&=  \frac{1}{  N_{\kb}^2 } \sum_{\tb_a, \tb_b}  \sum_{\substack{\sigmab_a,\sigmab_b, \\ \sigmab_c,\sigmab_d}}   \sum_{q,z =1}^{m} \sum_{i,j,h,l=1}^{p} M_{qi}(\sigmab_b)M_{zj}(\sigmab_a)  \\  
		& \times  M_{zh}(\sigmab_c)  M_{ql}(\sigmab_d)\E \Big[ e_i (\tb_a - \sigmab_b) \\ 
		&  \times    e_j (\tb_a + \kb - \sigmab_a ) e_h (\tb_b + \kb - \sigmab_c ) e_l (\tb_b - \sigmab_d) \Big].
	\end{align*}
	Now, in order to evaluate the expectation in the above formula, by recalling that $\eb$ is a white Gaussian noise, we distinguish the following cases: 
	if 
	\begin{equation*}
		\begin{cases}
			i=j, \quad h=l  \\ \tb_a - \sigmab_b = \tb_a + \kb - \sigmab_a  \\ \tb_b + \kb - \sigmab_c = \tb_b - \sigmab_d  \\ \tb_a - \sigmab_b  \neq  \tb_b - \sigmab_d 
		\end{cases}
	\end{equation*}
	or 
	\begin{equation*}
		\begin{cases}
			i=h, \quad j = l \\  \tb_a - \sigmab_b = \tb_b + \kb - \sigmab_c \\ \tb_a + \kb - \sigmab_a = \tb_b - \sigmab_d \\  \tb_a - \sigmab_b  \neq  \tb_b - \sigmab_d
		\end{cases}
	\end{equation*}
	or 
	\begin{equation*}
		\begin{cases}
			i=l, \quad j = h \\  \tb_a - \sigmab_b = \tb_b - \sigmab_d  \\ \tb_a + \kb - \sigmab_a = \tb_b + \kb - \sigmab_c \\   \tb_a - \sigmab_b \neq \tb_a + \kb - \sigmab_a,
		\end{cases}
	\end{equation*}
	then the expected value is 1.
	Moreover, if $\tb_a - \sigmab_b = \tb_a + \kb - \sigmab_a = \tb_b + \kb - \sigmab_c  = \tb_b - \sigmab_d $, the expectation is 1 if $i=j$, $h = l,$ $i\neq h$, or if $i=h$, $j = l,$ $i\neq j$, or if $i=l$, $j = h,$  $i\neq j$, while it is equal to 3 if $ i=j=h=l. $
	In all the other cases the expected value is zero. 
	In view of these observations, we have:
	\begin{align*}
		 \E  \Vert   \hat \Rb_\kb & \Vert^2  =  \frac{1}{  N_{\kb}^2 } \Big( \sum_{\tb_a, \tb_b}  \sum_{\sigmab_b, \sigmab_d}  \sum_{q,z  =1}^{m} \sum_{i,h=1}^{p}  M_{qi}(\sigmab_b)   \\
		& \times M_{zi}(\sigmab_b + \kb)  M_{zh}(\sigmab_d + \kb)  M_{q}(\sigmab_d) \\  
		& +    \sum_{\tb_a}  \sum_{\sigmab_a, \sigmab_b, \sigmab_d}  \sum_{q,z  =1}^{m} \sum_{i,j=1}^{p}  M_{qi}(\sigmab_b) M_{zj}(\sigmab_a) \\
		& \times M_{zi}(\sigmab_b + \sigmab_d  - \sigmab_a + 2\kb)    M_{qj}(\sigmab_d)  \\
		& +  \sum_{\tb_a}  \sum_{\sigmab_a, \sigmab_b, \sigmab_c}  \sum_{q,z  =1}^{m} \sum_{i,j=1}^{p}  M_{qi}(\sigmab_b) \\
		& \times M_{zj}(\sigmab_a) M_{zj}(\sigmab_c) M_{qi}(\sigmab_b + \sigmab_c - \sigmab_a)  \Big) \\
		&\leq  \Vert  \Rb_\kb  \Vert^2 + \frac{C_1}{N_{\kb}}  + \frac{C_2}{N_{\kb}} = \Vert  \Rb_\kb  \Vert^2 + \frac{C}{N_{\kb}}
		\numberthis \label{eq:rhat_square}
	\end{align*}
	where the inequality follows from the BIBO stability assumption \eqref{eq:BIBO} and $C:= C_1 + C_2$ is a constant independent of $\kb$ and $N$.	
	Consequently,  
	\begin{align} \label{eq:cov_error}
		\E  \Vert  \Rb_\kb - \hat\Rb_\kb  \Vert^2 &=  \E \Vert \hat \Rb_\kb  \Vert^2 - \Vert \Rb_\kb  \Vert^2  \leq \frac{C}{N_{\kb}} .
	\end{align} 
	Now, define 
	\beq \label{eq:S1} \Sb_1 (e^{i \thetab}) := \sum_{\kb \in \Lambda_n} (\Rb_\kb -  \hat \Rb_\kb ) e^{-i  \langle \thetab, \kb \rangle} \eeq 
	and 
	\beq \label{eq:S2}\Sb_2 (e^{i \thetab}) : = \sum_{\kb \in \Zbb^d \setminus \Lambda_n}  \Rb_\kb  e^{-i  \langle \thetab, \kb \rangle},\eeq 
	so that $\Deltab = \Sb_1 + \Sb_2.$
	From \eqref{eq:limit_1} and \eqref{eq:ell1} it follows that $$\lim_{N \to \infty} \Vert  \Sb_2 (e^{i \thetab}) \Vert = 0.$$
	Moreover, $ \E \Vert  \Sb_1 (e^{i \thetab}) \Vert = 0$ and  
	\begin{align*}
		\E   \Vert  \Sb_1 &(e^{i \thetab}) \Vert^2 \leq  \sum_{\kb_a, \kb_b } \sqrt{ \E  \Vert \Rb_{\kb_a} - \hat \Rb_{\kb_a} \Vert^2   \E \Vert \Rb_{\kb_b} - \hat \Rb_{\kb_b} \Vert^2   } \\
		& \leq  \sum_{\kb_a, \kb_b \in \Lambda_n}  \sqrt { \frac{C}{N_{\kb_a}}  \frac{C}{N_{\kb_b}} }  \\
		& \leq  \sum_{\kb_a, \kb_b \in \Lambda_n}  \frac{C}{(N - \max \{k_{a_1}, ..., k_{a_d}, k_{b_1}, ..., k_{b_d}\})^d } \\
		& \leq (2n+1)^{2d}\frac{C}{(N-n)^d}
	\end{align*}
	where the first inequality is a consequence of the Cauchy–Schwarz inequality, and second inequality comes from \eqref{eq:cov_error}. 
	Notice that under the assumption \eqref{eq:limit}, the right hand side of the final inequality tends to zero as $N$ grows to infinity.
	This suffices to conclude the proof. Indeed,  for each $\thetab$
	\begin{align*}
		\lim_{N \to \infty} \E  \Vert \Deltab (e^{i \thetab}) \Vert^2  &=\lim_{N \to \infty}  \{ \E \Vert \Sb_1 (e^{i \thetab}) \Vert^2 +  \Vert \Sb_2 (e^{i \thetab}) \Vert^2  \Big\} \\& = 0.  \qquad \Box  	
	\end{align*}
\end{proof}

\begin{remark}
It is interesting to observe that the weaker assumption  
	\beq \label{eq:l2sys} \sum_{\sigma \in \Zbb^d} \Vert \Mb (\sigma) \Vert^2 < \infty 	\eeq 
is sufficient to guarantee that the process in question is second-order stationary with finite variance matrix (see \cite[Ch.3]{priestley1982spectral}). \\
If we give up the BIBO stability assumption and we only assume \eqref{eq:l2sys}, the inequalities \eqref{eq:rhat_square} and \eqref{eq:cov_error} still hold. However we have that $ \Rb_{\kb} \in \ell_2 (\mathbb{Z}^d)$ so that the spectral density $\Phib$ is defined in the $L_2$ sense .  In other words  $ \sum_{\kb \in \Zbb^d}  \Rb_\kb  e^{-i  \langle \thetab, \kb \rangle} $ converges in the $L_2$-norm (and not necessarily pointwise) to  a function in $L_2(\mathbb{T}^d)$  by the Riesz-Fischer Theorem \cite[pag.91-92]{rudinreal}. 
Consequently,
$$
\Sb_2 (e^{i \thetab}) = \sum_{\kb \in \Zbb^d}  \Rb_\kb  e^{-i  \langle \thetab, \kb \rangle} - \sum_{\kb \in \Lambda_n}  \Rb_\kb  e^{-i  \langle \thetab, \kb \rangle}
$$
converges in the $L_2$-norm to zero as $N$ (and thus $n = f(N)$) grows to infinity.
In such a case the $f$-truncated periodogram  \eqref{eq:periodogram} with $n$ defined by \eqref{eq:n}-\eqref{eq:limit_1}-\eqref{eq:limit} and $\hat \Rb_k$ estimated through \eqref{eq:unbiasedACS_d} is  $L_2$-consistent, i.e. 
$$ 
\lim_{N \to \infty}  \int_{\Tbb^d} \E \Vert \Deltab (e^{i \thetab}) \Vert^2  d \thetab = 0 .
$$
In other words, our result remains valid provided that we consider consistency in the $L_2$
sense and not pointwise in $\thetab$.
\end{remark}

Next, we prove that the same consistency result of Thereom \ref{th:cons_unbias} holds when, in place of \eqref{eq:unbiasedACS_d}, we consider the biased covariance estimates
\beq \label{eq:biasedACS_d}
\hat \Rb_\kb := \frac{1}{N_{\zerob}} \sum_{\tb \in \Xi_{N,\kb} } \yb (\tb + \kb) \yb (\tb)\tp,
\eeq
with $N_{\zerob} := N^d,$ in the  truncated periodogram \eqref{eq:periodogram}.

\begin{theorem}\label{th:cons_bias}
Given any random field  $\yb$ of the form \eqref{eq:model}, the $f$-truncated periodogram \eqref{eq:periodogram} with $n$ defined by \eqref{eq:n}-\eqref{eq:limit_1}-\eqref{eq:limit} and $\hat \Rb_k$ estimated through \eqref{eq:biasedACS_d} is a uniformly mean-square consistent estimator of the spectral density.
\end{theorem}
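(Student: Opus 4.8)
The plan is to reduce the statement to Theorem~\ref{th:cons_unbias} by isolating the deterministic effect of the biased normalization. Write $\hat\Rb_\kb^{u}$ for the unbiased estimate \eqref{eq:unbiasedACS_d} and $\hat\Rb_\kb^{b}$ for the biased one \eqref{eq:biasedACS_d}; comparing the two definitions one reads off the elementary identity $\hat\Rb_\kb^{b} = (N_\kb/N_\zerob)\,\hat\Rb_\kb^{u}$, where $0 \le N_\kb/N_\zerob = \prod_{j=1}^{d}(1-|k_j|/N) \le 1$. Keeping in mind that $\E\hat\Rb_\kb^{u}=\Rb_\kb$ and using the algebraic splitting $\Rb_\kb - \hat\Rb_\kb^{b} = \big(1-\tfrac{N_\kb}{N_\zerob}\big)\Rb_\kb + \tfrac{N_\kb}{N_\zerob}\big(\Rb_\kb - \hat\Rb_\kb^{u}\big)$, I would write $\Deltab = \Phib - \hat\Phib$ (where $\hat\Phib$ now denotes the $f$-truncated periodogram \eqref{eq:periodogram} built with the biased lags) as $\Deltab = \Sb_2 + \Sb_3 + \Sb_4$, with
\begin{align*}
\Sb_2 := \sum_{\kb\in\Zbb^d\setminus\Lambda_n}\Rb_\kb\,e^{-i\langle\kb,\thetab\rangle}, \qquad
\Sb_3 := \sum_{\kb\in\Lambda_n}\Big(1-\tfrac{N_\kb}{N_\zerob}\Big)\Rb_\kb\,e^{-i\langle\kb,\thetab\rangle}, \\
\Sb_4 := \sum_{\kb\in\Lambda_n}\tfrac{N_\kb}{N_\zerob}\big(\Rb_\kb - \hat\Rb_\kb^{u}\big)\,e^{-i\langle\kb,\thetab\rangle}.
\end{align*}
Here $\Sb_2$ is exactly the tail term \eqref{eq:S2} already appearing in the proof of Theorem~\ref{th:cons_unbias}.

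Since $\Sb_2$ and $\Sb_3$ are deterministic and $\Sb_4$ has zero mean, $\E\Vert\Deltab\Vert^2 = \Vert\Sb_2+\Sb_3\Vert^2 + \E\Vert\Sb_4\Vert^2$, so it suffices to show that each of $\Sb_2$, $\Sb_3$ and $\E\Vert\Sb_4\Vert^2$ vanishes uniformly in $\thetab$. For $\Sb_2$ this is immediate from \eqref{eq:limit_1} and \eqref{eq:ell1}, exactly as in Theorem~\ref{th:cons_unbias}. For the bias term $\Sb_3$, I would use the elementary inequality $1-N_\kb/N_\zerob \le 1-(1-n/N)^d \le d\,n/N$ (valid once $n\le N$), together with the absolute summability \eqref{eq:ell1}, to get $\Vert\Sb_3(e^{i\thetab})\Vert \le \tfrac{d\,n}{N}\sum_{\kb\in\Zbb^d}\Vert\Rb_\kb\Vert \to 0$ uniformly in $\thetab$, since \eqref{eq:limit} forces $n/N\to 0$.

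The only genuinely new estimate is $\E\Vert\Sb_4\Vert^2$, and here I would mimic the bound on $\Sb_1$ in the proof of Theorem~\ref{th:cons_unbias}: expand $\Vert\Sb_4\Vert^2 = \Tr(\Sb_4\Sb_4^*)$, bound each unit-modulus exponential by $1$ and each $|\Tr(AB^*)|$ by $\Vert A\Vert\,\Vert B\Vert$, use $N_\kb/N_\zerob\le 1$, and apply the Cauchy--Schwarz inequality together with the covariance-estimation bound \eqref{eq:cov_error}; this gives
\begin{align*}
\E\Vert\Sb_4(e^{i\thetab})\Vert^2 \;\le\; \Big(\sum_{\kb\in\Lambda_n}\sqrt{\tfrac{C}{N_\kb}}\Big)^{2} \;\le\; (2n+1)^{2d}\,\frac{C}{(N-n)^d} \;=\; 2^{2d}C\,\Big(\tfrac{n^2}{N}\Big)^{d}\,(1+o(1)),
\end{align*}
which tends to $0$ by \eqref{eq:limit}. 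Combining the three bounds yields $\lim_{N\to\infty}\E\Vert\Deltab(e^{i\thetab})\Vert^2 = 0$ uniformly over $\Tbb^d$, proving the theorem. The one point to be careful about is the placement of the mean $\Rb_\kb$: trying to bound the difference between the unbiased and the biased periodogram directly via $\E\Vert\hat\Rb_\kb^{u}\Vert^2 = O(1)$ would only reach a bound of order $(2n+1)^{2d}(n/N)^2 \sim n^{2d+2}/N^2$, which is \emph{not} controlled by \eqref{eq:limit} when $d\ge 2$; peeling off the absolutely summable mean into the harmless term $\Sb_3$ is what lets $\Sb_4$ inherit the favourable $O\big((n^2/N)^d\big)$ decay of the unbiased case, and this is the only subtlety in the argument.
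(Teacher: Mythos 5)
Your proof is correct, and it reaches the conclusion by a slightly different route than the paper. The paper keeps the single stochastic term $\Sb_1=\sum_{\kb\in\Lambda_n}(\Rb_\kb-\hat\Rb_\kb)e^{-i\langle\kb,\thetab\rangle}$ built from the biased lags, rederives the second-moment bound for the biased estimator (its inequality \eqref{eq:cov_error_bias}, which is exactly your splitting in disguise: a $(1-N_\kb/N_\zerob)^2\Vert\Rb_\kb\Vert^2$ bias contribution plus a $C/N_\zerob$ fluctuation contribution), and then applies Cauchy--Schwarz, which produces three terms plus a nonzero cross term $\Tr\E[\Sb_1]\Sb_2\tp$ that must be argued away at the end. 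You instead peel the deterministic bias off \emph{before} taking norms, via the identity $\hat\Rb_\kb^{b}=(N_\kb/N_\zerob)\hat\Rb_\kb^{u}$, so that your $\Sb_4$ is exactly zero-mean, the cross terms vanish by orthogonality rather than by estimation, and the variance bound is inherited verbatim from \eqref{eq:cov_error} in the unbiased case with no new moment computation. The two arguments ultimately rest on the same two facts --- that $\sum_{\kb\in\Lambda_n}(1-N_\kb/N_\zerob)\Vert\Rb_\kb\Vert\to 0$ because $n/N\to 0$ and $\Rb_\kb$ is absolutely summable, and that the fluctuation is $O\big((2n+1)^{2d}/(N-n)^d\big)$ --- but your packaging is cleaner and makes the reduction to Theorem~\ref{th:cons_unbias} explicit. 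Your closing remark about why one cannot simply bound the biased--unbiased discrepancy through $\E\Vert\hat\Rb_\kb^{u}\Vert^2=O(1)$ correctly identifies the one place where care is needed; the paper handles the same issue implicitly by routing the bias through $\Vert\Rb_\kb\Vert$ in \eqref{eq:cov_error_bias}.
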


\begin{proof}
The proof is similar to the proof of Theorem \ref{th:cons_unbias}. 
The main difference is that 
\begin{multline*}
	\hat \Rb_\kb := \frac{1}{N_{\zerob}} \sum_{\tb \in \Xi_{N,\kb} } \sum_{\sigmab_a,\sigmab_b \in \Zbb^d}  \Mb (\sigmab_a) \eb(\tb + \kb - \sigmab_a) \times \\ \eb(\tb - \sigmab_b)\tp \Mb (\sigmab_b) \tp 
\end{multline*}
and then
$$ \E \hat \Rb_\kb = \frac{N_\kb}{N_{\zerob}} \Rb_\kb.$$
Repeating the same reasoning as in the proof of Theorem \ref{th:cons_unbias}, it is not difficult to see that inequality \eqref{eq:rhat_square} becomes
\begin{align*}
 \E  \Vert   \hat \Rb_\kb  \Vert^2  & \leq \left(\frac{N_{\kb}}{N_{\zerob}} \right)^2  \Vert  \Rb_\kb  \Vert^2 + \frac{N_{\kb} C_1}{N_{\zerob} ^2} + \frac{N_{\kb} C_2}{N_{\zerob}^2} \\ 
&\leq \left(\frac{N_{\kb}}{N_{\zerob}} \right)^2  \Vert  \Rb_\kb  \Vert^2 + \frac{C}{N_{\zerob}} . 
\end{align*}
Consequently,  
\begin{align*} 
\E  \Vert  \Rb_\kb - \hat\Rb_\kb  \Vert^2 &=  \left(1 - 2\frac{N_\kb}{N_{\zerob}} \right) \Vert \Rb_\kb\Vert^2 + \E \Vert \hat \Rb_\kb  \Vert^2 
 \\& \leq \left( 1 - \frac{N_\kb}{N_{\zerob}}  \right)^2 \Vert \Rb_\kb\Vert^2 + \frac{C}{N_{\zerob}} .
 \numberthis \label{eq:cov_error_bias}
\end{align*} 
Let $\Sb_1$ and $\Sb_2$ be defined by \eqref{eq:S1} and \eqref{eq:S2}, respectively and $\Deltab = \Sb_1 + \Sb_2.$  
We have already observed in Theorem \ref{th:cons_unbias} that $\lim_{N \to \infty} \Vert  \Sb_2 (e^{i \thetab}) \Vert = 0.$
Next, we prove that also $\Vert  \Sb_1 (e^{i \thetab}) \Vert$ converges to zero {\luci in the mean-square sense}. To this end, we notice that 
\begin{align*}
\E   \Vert  \Sb_1 (e^{i \thetab})  \Vert^2 &\leq  \sum_{\kb_a, \kb_b} \sqrt{ \E  \Vert \Rb_{\kb_a} - \hat \Rb_{\kb_a} \Vert^2   \E \Vert \Rb_{\kb_b} - \hat \Rb_{\kb_b} \Vert^2   } \\
& \leq  \sum_{\kb_a \in \Lambda_n} \sqrt {  \big[( 1 - \frac{N_{\kb_a}}{N_{\zerob}} )^2\Vert \Rb_{\kb_a} \Vert^2   +  \frac{C}{N_{\zerob}} \big]} \\ 
&   \times  \sum_{\kb_b \in \Lambda_n} \sqrt { \big[( 1 - \frac{N_{\kb_b}}{N_{\zerob}} )^2\Vert \Rb_{\kb_b} \Vert^2   +  \frac{C}{N_{\zerob}} \big] }  \\
& \leq  \Big( \sum_{\kb \in \Lambda_n} ( 1 - \frac{N_{\kb}}{N_{\zerob}} )\Vert \Rb_{\kb}\Vert \Big) ^2 +
2 \sqrt{\frac{C}{N_{\zerob}}} \\
& \times \sum_{\kb \in \Lambda_n}(1 - \frac{N_{\kb}}{N_{\zerob}})\Vert \Rb_{\kb}\Vert + (2n+1)^{2d}\frac{C}{N_{\zerob}},
\end{align*}
where the first inequality is a consequence of the Cauchy–Schwarz inequality, and second inequality comes from \eqref{eq:cov_error_bias}. 
The right side of the last inequality is given by the sum of three terms, of which the last obviously tends to zero as $N$ grows to infinity because of \eqref{eq:limit}. As regards the other two terms, notice that 
$$ 
\sum_{\kb \in \Lambda_n} ( 1 - \frac{N_{\kb}}{N_{\zerob}} )\Vert \Rb_{\kb}\Vert \leq \sum_{j=0}^{d} \binom{d}{j}\left(\frac{n}{N}\right)^j  \sum_{\kb \in \Lambda_n} \Vert \Rb_{\kb}\Vert ,
$$
where $\binom{d}{j} $ is the binomial coefficient.
The right-hand side goes to zero if condition \eqref{eq:limit} is satisfied since the infinite sum  $\sum_{\kb \in \Zbb^d} \Vert \Rb_{\kb}\Vert$ is assumed to be convergent.
Hence $\lim_{N \to \infty} \E   \Vert  \Sb_1 (e^{i \thetab}) \Vert^2 = 0$ and, as a consequence,  also $\lim_{N \to \infty} \E   \Vert  \Sb_1 (e^{i \thetab}) \Vert = 0.$
This concludes the proof. Indeed, for each $\thetab$
\begin{align*}
	\lim_{N \to \infty} \E  \Vert \Deltab (e^{i \thetab}) \Vert^2  &=\lim_{N \to \infty}  \Big\{ \E \Vert \Sb_1 (e^{i \thetab}) \Vert^2 +  \Vert \Sb_2 (e^{i \thetab}) \Vert^2   \\&   + 2  \Tr \E [\Sb_1 (e^{i \thetab})] \Sb_2 (e^{i \thetab}) \tp \Big\}  = 0.  \qquad 
	\Box  	
\end{align*}
\end{proof}	
So far we have assumed that $\yb$ is a real-valued signal. However, the result can be easily generalized to the complex-valued case by replacing the transpose operator $(\cdot)\tp$ with the complex-conjugate operator $(\cdot)^*$ in the previous definitions and computations:
\begin{theorem}
	Let $\yb(\tb)$ be a second-order stationary complex $m$-valued random field defined over $ \Zbb^d.$ Suppose that the signal is obtained by linearly filtering the $p$-dimensional circular white noise 	sequence	of unit variance $\eb = \{ \eb(\tb),  \tb\in \Zbb^d \}$
		\footnote{A sequence $\{\eb(\tb), \; t\in\mathbb Z^d \}$ is called \emph{circular white noise} if it satisfies 
		$\E \eb(\tb)\eb^*(\ssb) = \sigma^2 \delta_{\tb,\ssb} $ and $ \E{\eb(\tb)\eb(\ssb)\tp}
		= 0$ for all $\tb$ and $\ssb$. Note that $\sigma^2 =	\E \eb(\tb)\eb^*(\tb)$ is the variance of the signal \cite[p.32]{stoica2005spectral}.} 
	:
	$$ 
	\yb (\tb) = \sum_{\sigmab \in \Zbb^d} \Mb (\sigmab) \eb (\tb - \sigmab), \qquad \tb = (t_1, \dots, t_d ) \in \Zbb^d.
	$$
	Here, the impulse response of the filter $ \Mb (\cdot) : \Zbb^d \to \Cbb^{m \times p} $ satisfies the assumption \eqref{eq:BIBO}. 
	Let $\hat \Phib $ be the $f$-truncated periodogram \eqref{eq:periodogram} where 
	$$
	\hat \Rb_\kb := \frac{1}{N_\kb} \sum_{\tb \in \Xi_{N,\kb} } \yb (\tb + \kb) \yb (\tb)^*.
	$$
	or
	\beq\label{eq:unbACS_complex}
	\hat \Rb_\kb := \frac{1}{N_{\zerob}} \sum_{\tb \in \Xi_{N,\kb} } \yb (\tb + \kb) \yb (\tb)^*
	\eeq
	and $n$ is given by \eqref{eq:n}-\eqref{eq:limit_1}-\eqref{eq:limit}. Then,  $\hat \Phib $  is a uniformly mean-square consistent estimator of the spectral density $\Phib$ of the field. 
\end{theorem}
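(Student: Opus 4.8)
The plan is to retrace the proofs of Theorems \ref{th:cons_unbias} and \ref{th:cons_bias} almost verbatim, with $(\cdot)\tp$ replaced by $(\cdot)^*$ throughout, and to isolate the single genuinely new ingredient, namely the fourth-order moment structure of a circular complex Gaussian sequence. First I would record the elementary algebraic identities: plugging the filtering representation into $\Rb_\kb = \E\,\yb(\tb+\kb)\yb(\tb)^*$ gives $\Rb_\kb = \sum_{\sigmab\in\Zbb^d}\Mb(\sigmab+\kb)\Mb(\sigmab)^*$, and the $\ell_1$ estimate $\sum_{\kb\in\Zbb^d}\Vert\Rb_\kb\Vert<\infty$ follows from \eqref{eq:BIBO} by exactly the computation in \eqref{eq:ell1}. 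Likewise $\E\hat\Rb_\kb=\Rb_\kb$ for the normalization by $N_\kb$ and $\E\hat\Rb_\kb=(N_\kb/N_{\zerob})\Rb_\kb$ for the normalization \eqref{eq:unbACS_complex} by $N_{\zerob}$, exactly as before.

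The only step that really changes is the evaluation of $\E\Vert\hat\Rb_\kb\Vert^2=\E\,\Tr(\hat\Rb_\kb\hat\Rb_\kb^*)$. Expanding it produces expectations of the form $\E\big[e_i(\tb_a-\sigmab_b)\,\overline{e_j(\tb_a+\kb-\sigmab_a)}\,e_h(\tb_b+\kb-\sigmab_c)\,\overline{e_l(\tb_b-\sigmab_d)}\big]$, and here I would invoke the Isserlis/Wick formula for circular complex Gaussians: since $\E\,\eb(\tb)\eb(\ssb)\tp=0$, the only surviving pairings are those matching an unconjugated entry with a conjugated one, so only \emph{two} contractions contribute instead of the three of the real case — the "self" pairing enforcing $\tb_a-\sigmab_b=\tb_a+\kb-\sigmab_a$ and $\tb_b+\kb-\sigmab_c=\tb_b-\sigmab_d$, and the "crossed" pairing enforcing $\tb_a-\sigmab_b=\tb_b-\sigmab_d$ and $\tb_a+\kb-\sigmab_a=\tb_b+\kb-\sigmab_c$; the degenerate all-equal case contributes $\E|e|^4=2$ rather than $3$. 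Reading off these constraints exactly as in the proof of Theorem \ref{th:cons_unbias}, the self pairing reproduces the $\Vert\Rb_\kb\Vert^2$ term, while the crossed pairing — after summing the single free time index $\tb_a$, which contributes a factor $N_\kb\le N_{\zerob}$ — is bounded using \eqref{eq:BIBO} by $C/N_\kb$ (respectively $C/N_{\zerob}$ in the biased normalization). Hence $\E\Vert\hat\Rb_\kb\Vert^2\le\Vert\Rb_\kb\Vert^2+C/N_\kb$ and $\E\Vert\Rb_\kb-\hat\Rb_\kb\Vert^2\le C/N_\kb$ in the first case, and the analogue of \eqref{eq:cov_error_bias} in the second, with $C$ independent of $\kb$ and $N$.

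With these covariance error bounds in hand, the remainder is identical to the real case. I would split $\Deltab=\Sb_1+\Sb_2$ with $\Sb_1(e^{i\thetab})=\sum_{\kb\in\Lambda_n}(\Rb_\kb-\hat\Rb_\kb)e^{-i\langle\thetab,\kb\rangle}$ and $\Sb_2(e^{i\thetab})=\sum_{\kb\in\Zbb^d\setminus\Lambda_n}\Rb_\kb e^{-i\langle\thetab,\kb\rangle}$; then $\Vert\Sb_2\Vert\to0$ by \eqref{eq:limit_1} and the $\ell_1$ summability of $\{\Rb_\kb\}$, while Cauchy--Schwarz applied to $\E\Vert\Sb_1\Vert^2$ together with the bound on $\E\Vert\Rb_\kb-\hat\Rb_\kb\Vert^2$ gives, in the first case, $\E\Vert\Sb_1\Vert^2\le(2n+1)^{2d}C/(N-n)^d\to0$ by \eqref{eq:limit}, and in the case of \eqref{eq:unbACS_complex} the three-term estimate of Theorem \ref{th:cons_bias}, where $\sum_{\kb\in\Lambda_n}(1-N_\kb/N_{\zerob})\Vert\Rb_\kb\Vert\to0$ again by \eqref{eq:limit} and the $\ell_1$ summability. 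Since all bounds are uniform in $\thetab$, combining the two pieces yields $\E\Vert\Deltab(e^{i\thetab})\Vert^2\to0$ uniformly over $\Tbb^d$. The hard part — indeed the only part needing real care — is the bookkeeping in the fourth-moment expansion: one must apply the complex Wick formula correctly so that exactly the self and crossed contractions appear (and no spurious third term), and check that every residual sum over the $\sigmab$'s is absolutely convergent by \eqref{eq:BIBO}; once this is settled the argument closes as above.
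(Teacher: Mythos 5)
Your proposal is correct and follows essentially the same route as the paper, which offers no separate proof for this theorem and merely asserts that one repeats the arguments of Theorems \ref{th:cons_unbias} and \ref{th:cons_bias} with $(\cdot)\tp$ replaced by $(\cdot)^*$. You in fact supply the one detail the paper leaves implicit --- the circular complex Wick expansion, in which only the self and crossed pairings survive (since $\E\,\eb(\tb)\eb(\ssb)\tp=0$) and the degenerate case gives $\E|e|^4=2$ --- and this is handled correctly, so the covariance-error bounds and the $\Sb_1+\Sb_2$ splitting go through exactly as in the real case.
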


In the following we give a concrete problem that can be solved using the proposed estimator.

\subsection{Automotive {\luci target} parameter estimation} \label{sec:RAD}
In this {\luci subsection}, we consider the problem of target reconstruction in an automotive radar systems. \\
Radar technology is currently used in many applications of advanced driver assistance systems (ADASs) to estimate locations and velocities of surrounding targets.
State-of-the-art radar sensors use the chirp sequence modulation principle and an uniform array of receive antennas (ULA) to independently measure the range $r$, the relative velocity $v$ and the angle $\alpha$ of multiple targets in the field of view. 
Assume that only one target is present in the radar coverage and a ULA of receive antennas is used for the measurement as shown in Figure \ref{fig:radar}. 
According to \cite{ZHU2021}, the (scalar) measurements of a ULA in a coherent processing interval (CPI) can be approximated through the rational model:
\beq  \label{eq:radar_AR} 
y(\tb) = \frac{1}{1 - \langle \alphab,\zb^{-1} \rangle} v(\tb) +  w(\tb).
\eeq 
The index $\tb$ takes value in the set $ \{ \tb = (t_1,t_2,t_3) \in \Zbb^3 : 0 \leq t_j \leq N_j -1, j=1,2,3  \},$ where $N_1, N_2$ and $N_3$ are the number of range samples, pulses and antennas, respectively, and $\Nb := [N_1, N_2, N_3]$ defines the size of data array; $\alpha_j = \rho_j e^{i \omega_j}$ ($j=1,2,3$)  with $\rho_j $ such that the sum $\rho_1 + \rho_2 + \rho_3$ is close to one; $\langle \alphab,\zb^{-1} \rangle := \alpha_1 z_1^{-1} + \alpha_2 z_2^{-1} + \alpha_3 z_3^{-1};$ $v(\tb)$ is white noise with unit variance;  the process $w$ is circular complex white noise with covariance $\lambda^2$. 
The real vector $\omegab = [\omega_1,\omega_2, \omega_3] \in [0, 2\pi)^3$ contains the three unknown angular target frequencies from which we can readily recover the target range, relative velocity and azimuth angle (see \cite{Engels2014}).
\begin{figure}
	\centering
	\includegraphics[width=\linewidth]{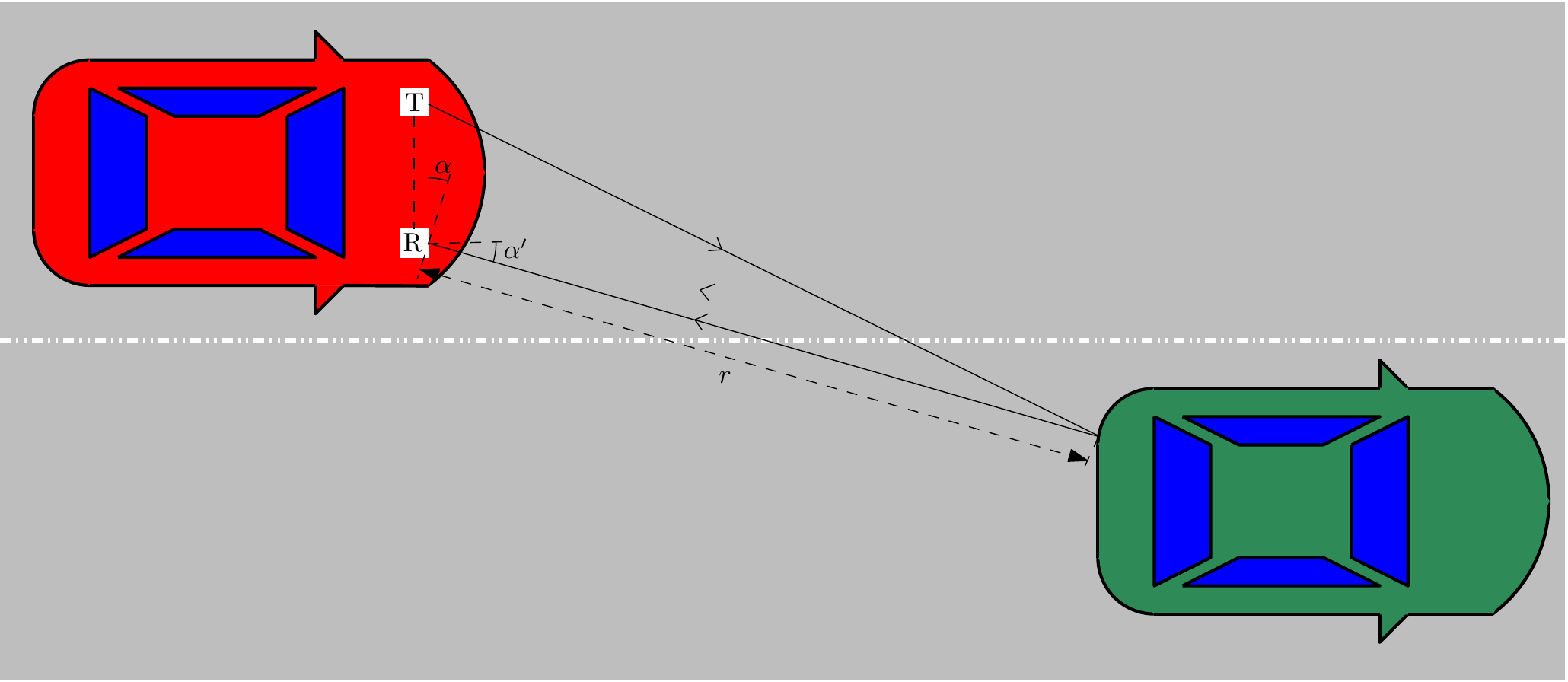}
	\caption{{\em Target parameter estimation set-up.} The automotive radar system is installed in the red car; the green car is the target. $T$ is the transmitter; $R$ is the ULA receiver, $r$ is range of the target; $\alpha$ is the azimuth angle.
	Under the \emph{far field assumption} which is common in this kind of setup $\alpha \approxeq \alpha'.$ }
	\label{fig:radar}
\end{figure}  
The true spectrum $\Phi$ of the random field \eqref{eq:radar_AR} is given by
$$
\Phi(e^{i\thetab}) = \frac{1}{ |1 -\langle \alphab, e^{-i \thetab } \rangle|^2 } + \lambda^2
$$
and it clearly shows a peak at the vector $\omegab.$ 

We give the results of a Monte Carlo simulation composed by $20$ trials for each value of the data array $\Nb.$ 
In each trial, every component of the target frequency vector $\omegab$ is drawn from the uniform distribution in $[0,2\pi);$
the pole moduli are fixed as $\rho_1 = \rho_2 = \rho_3 = 0.3;$ the measurement noise $w$ is zero-mean Gaussian with variance $\lambda^2= 2$ . 
To keep it simple we assume that $N_1 = N_2 = N_3 = N.$ After choosing the size $N$, a realization of the process $y$ is generated according to \eqref{eq:radar_AR} considering zero boundary conditions.  
Then, the full (unwindowed) periodogram and the periodogram \eqref{eq:periodogram} smoothed with a rectangular window of length $n =  f(N) = \lfloor\sqrt[3]{N} \rfloor$ are computed; the biased estimates \eqref{eq:unbACS_complex} of the covariance lags are considered.
Once an estimate $ \hat \Phi$ of the spectrum is available, it is possible to estimate the peak location $\hat \omegab$ as
$$
\hat \omegab = \arg\max_{\theta \in \Tbb^3} \hat \Phi (\theta).
$$
We are interested in how well the true spectrum is approximated and the capability of these estimators to correctly locate the peak. 
Hence, we measure the average (among $20$ trials of a Monte Carlo simulation) of the relative estimation error of the spectrum approximation $\Vert \hat \Phi - \Phi \Vert / \Vert \Phi\Vert,$  and the average of the error of peak finding  $\Vert \hat \omegab - \omegab \Vert.$ These quantities are depicted in Figure \ref{fig:radar_phi} and \ref{fig:radar_theta} for increasing values of $N$. As one can see, only the performance of the truncated periodogram improves with the sample size.  

We also consider one specific example in which the data size is $\Nb = [1000, 1000, 1000]$ and the true frequency vector for the data generation is $\omegab = [2.58, 1.07, 2.88].$ Figure \ref{fig:radar_sec1}, \ref{fig:radar_sec2} and \ref{fig:radar_sec3} display the sections of the two estimated spectra in the first, second and third dimension, respectively, in correspondence of the true peak value $\omegab.$ The figures confirms that the $f$-truncated periodogram is able to correctly locate the peak and that it clearly outperforms the full periodogram. It is worth noting that the periodogram's performance is not good because it is not able to approximate well the shape of the peak which is a consequence of the fact that its values at adjoining frequencies are asymptotically uncorrelated.

\begin{figure}
	\centering
	\includegraphics[width=\linewidth]{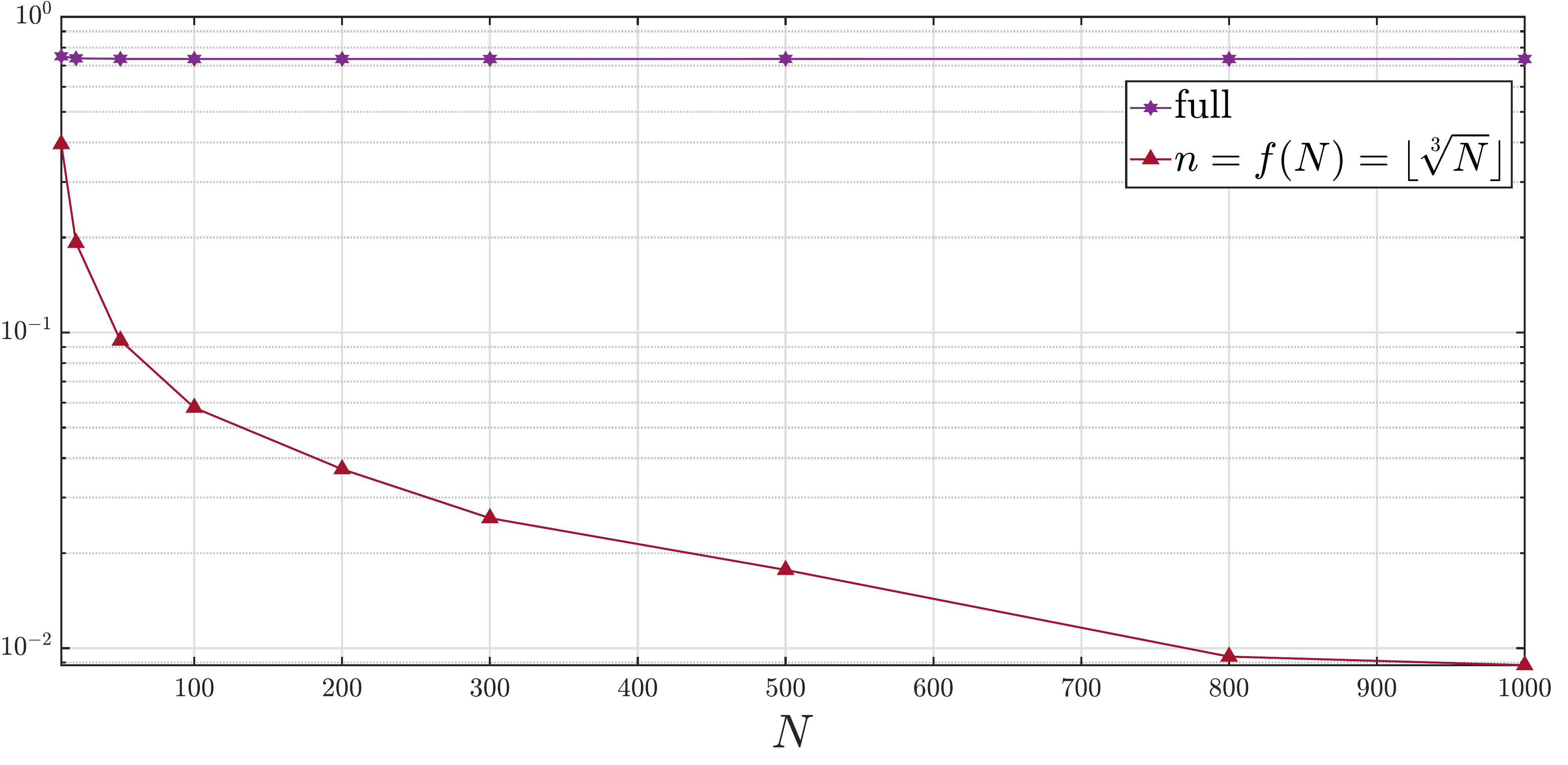}
	\caption{{ Automotive Target parameter estimation:} average relative estimation error $\Vert \Phib - \hat \Phib \Vert / \Vert \Phib \Vert$ as a function of the sample size $N.$ Comparison between the two spectral estimators.}
	\label{fig:radar_phi}
\end{figure}
\begin{figure}
	\centering
	\includegraphics[width=\linewidth]{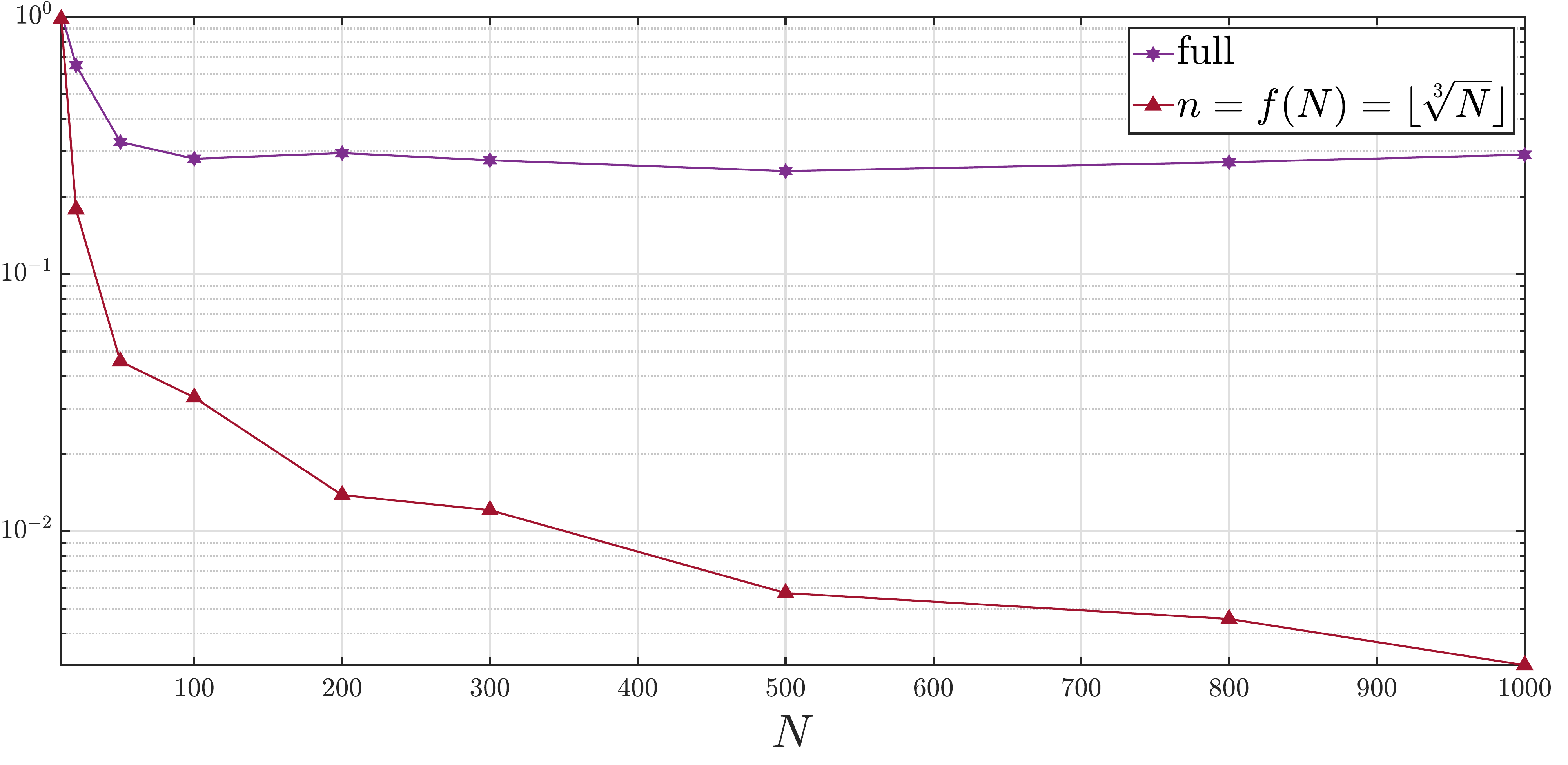}
	\caption{{Automotive Target parameter estimation:} average frequency estimation error $\Vert \hat \omegab - \omegab \Vert$ as a function of the sample size $N.$ Comparison between the two spectral estimators. }
	\label{fig:radar_theta}
\end{figure}
\begin{figure}
	\centering
	\includegraphics[width=\linewidth]{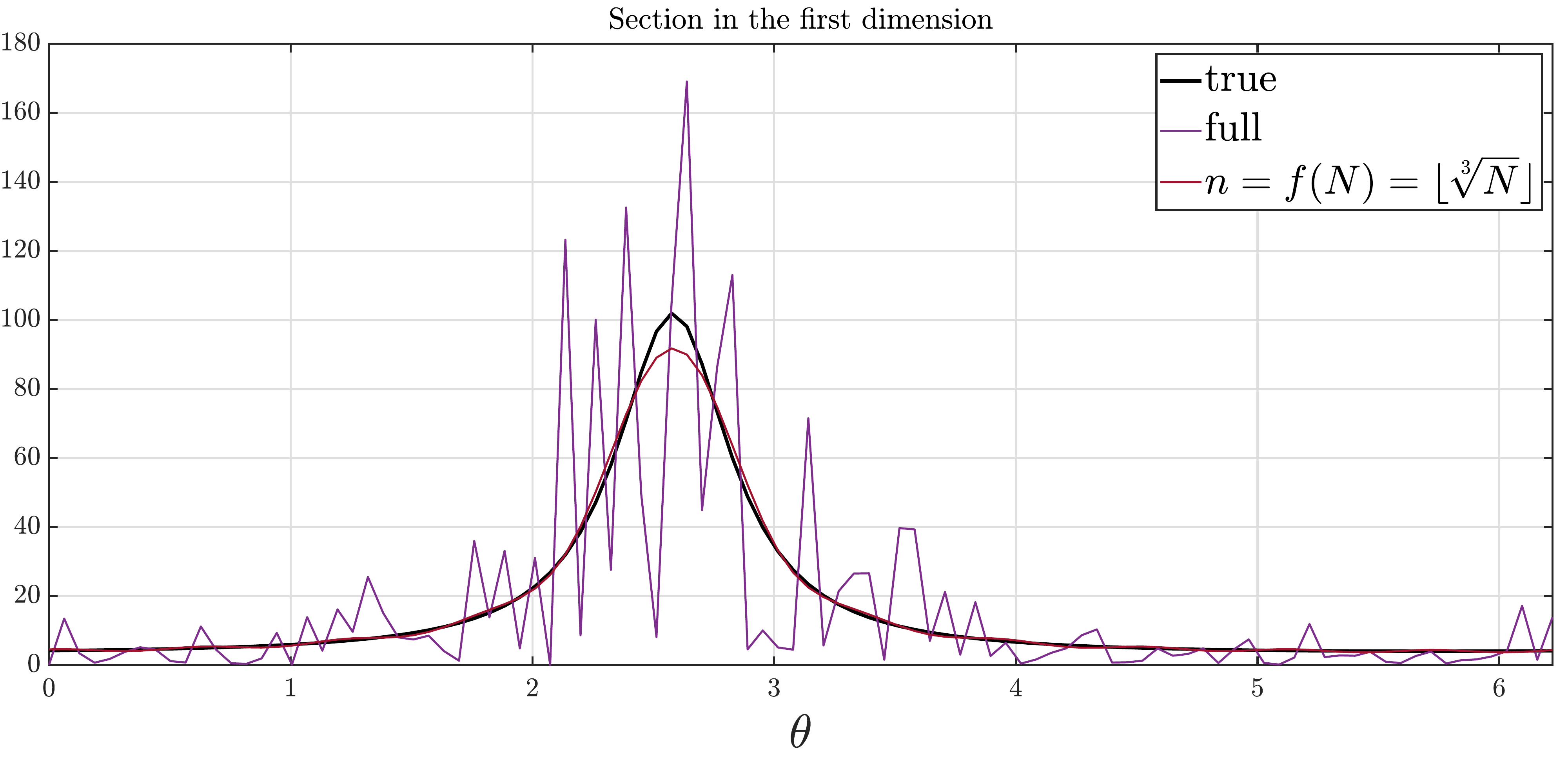}
	\caption{Automotive radar: true and estimated spectra at the cross section $[ \cdot, \; 1.07, \; 2.88 ] $ where $\Phi$ has the peak.}
	\label{fig:radar_sec1}
\end{figure}
\begin{figure}
	\centering
	\includegraphics[width=\linewidth]{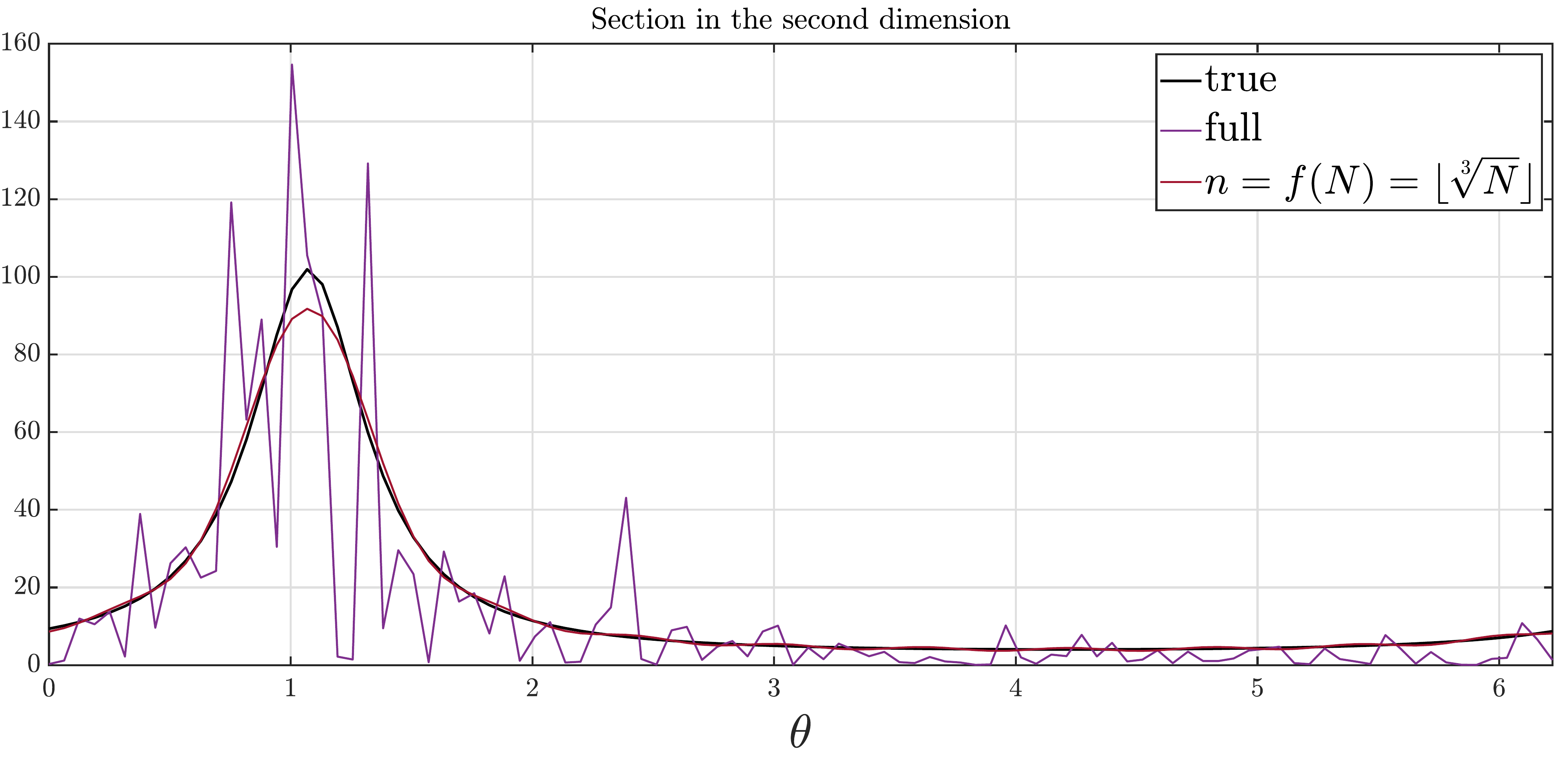}
	\caption{Automotive radar: true and estimated spectra at the cross section $[ 2.58, \; \cdot , \; 2.88 ] $ where $\Phi$ has the peak.}
	\label{fig:radar_sec2}
\end{figure}
\begin{figure}
	\centering
	\includegraphics[width=\linewidth]{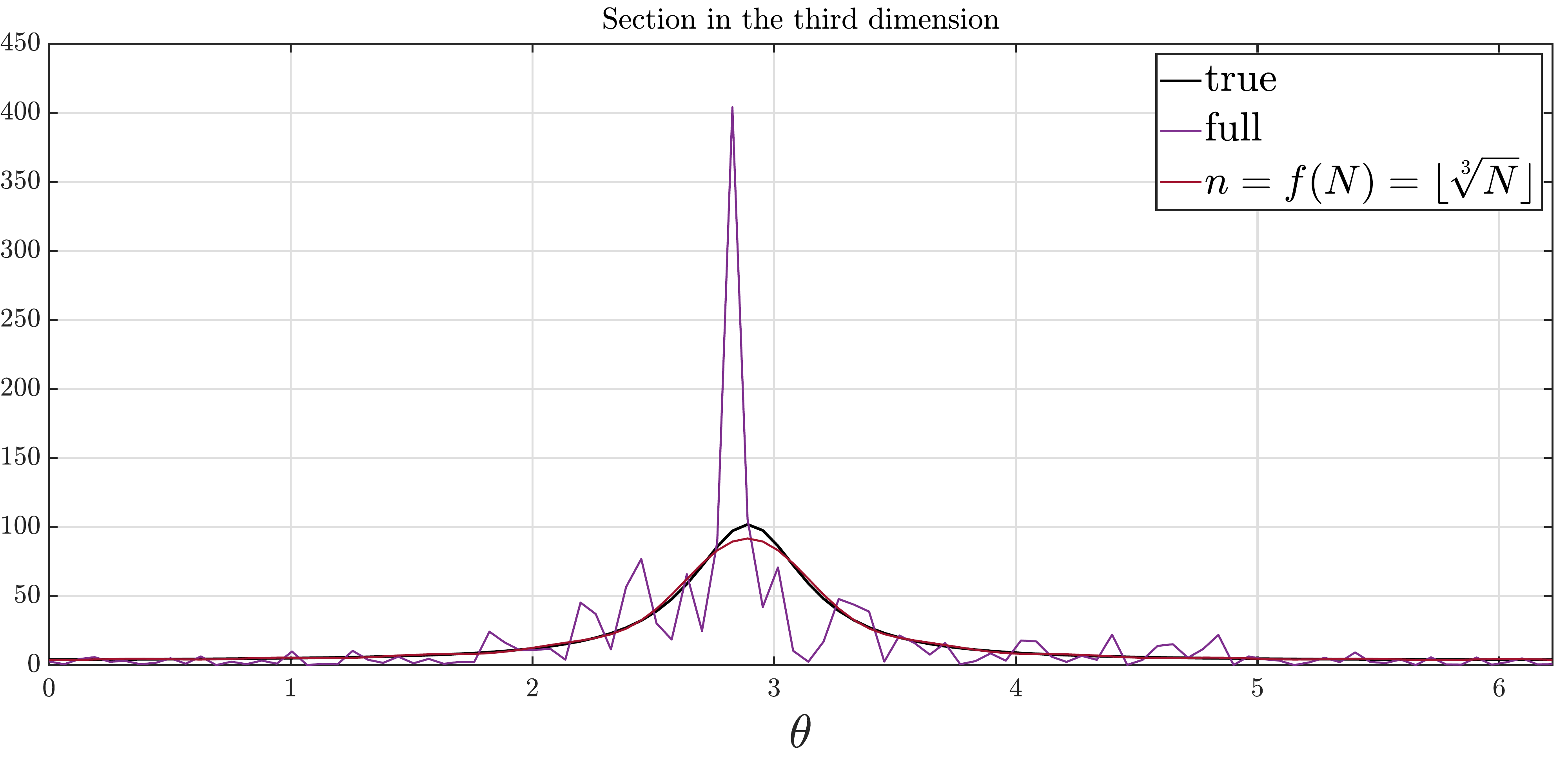}
	\caption{Automotive radar: true and estimated spectra at the cross section $[2.58, \; 1.07, \; \cdot] $ where $\Phi$ has the peak.}
	\label{fig:radar_sec3}
\end{figure}

\section{Conclusions}\label{sec:conc}
	In this paper we have considered the estimation of the spectral density of stationary processes via the $f$-truncated periodogram.
	We have proven that, by choosing the truncation point as an appropriate function $f$ of the sample size, the resulting estimator is mean-square consistent both when applied with the unbiased and biased sample covariances.\\
	To illustrate the theory and to highlight the importance in practice of the proposed estimator, we have performed numerical simulations concerning three concrete identification problems: the impulse response estimation of a SISO system, the problem of learning undirected graphical models and the target parameter estimation in automotive radars. The results confirm that our spectral estimator is effective. We finally remark that results similar to those presented in {\luci Subsections} \ref{sec:ETFE}, \ref{sec:NET} and \ref{sec:RAD} have been obtained by considering the $f$-truncated periodograms with {\luci different truncation functions, namely } 
	$f(N) = \lfloor N^{0.48}\rfloor$ and $f(N) = \lfloor \sqrt[4]{N}\rfloor.$


\begin{thebibliography}{26}
\expandafter\ifx\csname natexlab\endcsname\relax\def\natexlab#1{#1}\fi
\providecommand{\bibinfo}[2]{#2}
\ifx\xfnm\relax \def\xfnm[#1]{\unskip,\space#1}\fi
\bibitem[{Alpago et~al.(2021)Alpago, Zorzi \& Ferrante}]{alpago2021scalable}
\bibinfo{author}{Alpago, D.}, \bibinfo{author}{Zorzi, M.}, \&
  \bibinfo{author}{Ferrante, A.} (\bibinfo{year}{2021}).
\newblock \bibinfo{title}{A scalable strategy for the identification of
  latent-variable graphical models}.
\newblock {\it \bibinfo{journal}{IEEE Transactions on Automatic Control}\/}, .
\bibitem[{Avventi et~al.(2013)Avventi, Lindquist \& Wahlberg}]{Avventi_ARMA}
\bibinfo{author}{Avventi, E.}, \bibinfo{author}{Lindquist, A.~G.}, \&
  \bibinfo{author}{Wahlberg, B.} (\bibinfo{year}{2013}).
\newblock \bibinfo{title}{Arma identification of graphical models}.
\newblock {\it \bibinfo{journal}{IEEE Transactions on Automatic Control}\/},
  {\it \bibinfo{volume}{58}\/}, \bibinfo{pages}{1167--1178}.
\bibitem[{Bartlett(1948)}]{Bartlett1948}
\bibinfo{author}{Bartlett, M.~S.} (\bibinfo{year}{1948}).
\newblock \bibinfo{title}{Smoothing periodograms from time-series with
  continuous spectra}.
\newblock {\it \bibinfo{journal}{Nature}\/},  {\it \bibinfo{volume}{161}\/},
  \bibinfo{pages}{686--687}.
\bibitem[{Bartlett \& Medhi(1955)}]{Bartlett1955}
\bibinfo{author}{Bartlett, M.~S.}, \& \bibinfo{author}{Medhi, J.}
  (\bibinfo{year}{1955}).
\newblock \bibinfo{title}{On the efficiency of procedures for smoothing
  periodograms from time series with continuous spectra}.
\newblock {\it \bibinfo{journal}{Biometrika}\/},  {\it \bibinfo{volume}{42}\/},
  \bibinfo{pages}{143--150}.
\bibitem[{Blackman \& Tukey(1958)}]{Blackman1958}
\bibinfo{author}{Blackman, R.~B.}, \& \bibinfo{author}{Tukey, J.~W.}
  (\bibinfo{year}{1958}).
\newblock \bibinfo{title}{The measurement of power spectra from the point of
  view of communications engineering — part i}.
\newblock {\it \bibinfo{journal}{Bell System Technical Journal}\/},  {\it
  \bibinfo{volume}{37}\/}, \bibinfo{pages}{185--282}.
\bibitem[{Byrnes et~al.(2000)Byrnes, Georgiou \& Lindquist}]{byrnes2000new}
\bibinfo{author}{Byrnes, C.}, \bibinfo{author}{Georgiou, T.~T.}, \&
  \bibinfo{author}{Lindquist, A.} (\bibinfo{year}{2000}).
\newblock \bibinfo{title}{A new approach to spectral estimation: A tunable
  high-resolution spectral estimator}.
\newblock {\it \bibinfo{journal}{IEEE Transactions on Signal Processing}\/},
  {\it \bibinfo{volume}{48}\/}, \bibinfo{pages}{3189--3205}.
\bibitem[{Dahlhaus(2000)}]{ID_DAHLHAUS}
\bibinfo{author}{Dahlhaus, R.} (\bibinfo{year}{2000}).
\newblock \bibinfo{title}{Graphical interaction models for multivariate time
  series}.
\newblock {\it \bibinfo{journal}{Metrika}\/},  {\it \bibinfo{volume}{51}\/},
  \bibinfo{pages}{157--172}.
\bibitem[{Daniell(1946)}]{daniell1946discussion}
\bibinfo{author}{Daniell, P.~J.} (\bibinfo{year}{1946}).
\newblock \bibinfo{title}{Discussion on symposium on autocorrelation in time
  series}.
\newblock {\it \bibinfo{journal}{Journal of the Royal Statistical Society}\/},
  {\it \bibinfo{volume}{8}\/}.
\bibitem[{Engels(2014)}]{Engels2014}
\bibinfo{author}{Engels, F.} (\bibinfo{year}{2014}).
\newblock \bibinfo{title}{Target shape estimation using an automotive radar}.
\newblock In \bibinfo{editor}{G.~Schmidt}, \bibinfo{editor}{H.~Abut},
  \bibinfo{editor}{K.~Takeda}, \& \bibinfo{editor}{J.~H. Hansen} (Eds.), {\it
  \bibinfo{booktitle}{Smart Mobile In-Vehicle Systems: Next Generation
  Advancements}\/} (pp. \bibinfo{pages}{271--290}).
\newblock \bibinfo{address}{New York, NY}: \bibinfo{publisher}{Springer New
  York}.
\bibitem[{Engels et~al.(2017)Engels, Heidenreich, Zoubir, Jondral \&
  Wintermantel}]{Engels2017}
\bibinfo{author}{Engels, F.}, \bibinfo{author}{Heidenreich, P.},
  \bibinfo{author}{Zoubir, A.~M.}, \bibinfo{author}{Jondral, F.~K.}, \&
  \bibinfo{author}{Wintermantel, M.} (\bibinfo{year}{2017}).
\newblock \bibinfo{title}{Advances in automotive radar: A framework on
  computationally efficient high-resolution frequency estimation}.
\newblock {\it \bibinfo{journal}{IEEE Signal Processing Magazine}\/},  {\it
  \bibinfo{volume}{34}\/}, \bibinfo{pages}{36--46}.
\bibitem[{Ferrante et~al.(2012)Ferrante, Masiero \&
  Pavon}]{FERRANTE_TIME_AND_SPECTRAL_2012}
\bibinfo{author}{Ferrante, A.}, \bibinfo{author}{Masiero, C.}, \&
  \bibinfo{author}{Pavon, M.} (\bibinfo{year}{2012}).
\newblock \bibinfo{title}{Time and spectral domain relative entropy: A new
  approach to multivariate spectral estimation}.
\newblock {\it \bibinfo{journal}{IEEE Trans. Autom. Control}\/},  {\it
  \bibinfo{volume}{57}\/}, \bibinfo{pages}{2561--2575}.
\bibitem[{Ferrante et~al.(2008)Ferrante, Pavon \&
  Ramponi}]{Hellinger_Ferrante_Pavon}
\bibinfo{author}{Ferrante, A.}, \bibinfo{author}{Pavon, M.}, \&
  \bibinfo{author}{Ramponi, F.} (\bibinfo{year}{2008}).
\newblock \bibinfo{title}{Hellinger versus {K}ullback-{L}eibler multivariable
  spectrum approximation}.
\newblock {\it \bibinfo{journal}{IEEE Trans. Autom. Control}\/},  {\it
  \bibinfo{volume}{53}\/}, \bibinfo{pages}{954--967}.
\bibitem[{Georgiou(2006)}]{georgiou2006}
\bibinfo{author}{Georgiou, T.} (\bibinfo{year}{2006}).
\newblock \bibinfo{title}{Relative entropy and the multivariable
  multidimensional moment problem}.
\newblock {\it \bibinfo{journal}{IEEE Transactions on Information Theory}\/},
  {\it \bibinfo{volume}{52}\/}, \bibinfo{pages}{1052--1066}.
\bibitem[{Grenander \& Rosenblatt(1953)}]{grenander1953}
\bibinfo{author}{Grenander, U.}, \& \bibinfo{author}{Rosenblatt, M.}
  (\bibinfo{year}{1953}).
\newblock \bibinfo{title}{{Statistical Spectral Analysis of Time Series Arising
  from Stationary Stochastic Processes}}.
\newblock {\it \bibinfo{journal}{The Annals of Mathematical Statistics}\/},
  {\it \bibinfo{volume}{24}\/}, \bibinfo{pages}{537 -- 558}.
\bibitem[{Ljung(1999)}]{Ljung_SI}
\bibinfo{author}{Ljung, L.} (\bibinfo{year}{1999}).
\newblock {\it \bibinfo{title}{System Identification: Theory for the User.
  Second Edition}\/}.
\newblock \bibinfo{publisher}{Prentice Hall PTR}.
\bibitem[{Lomnicki \& Zaremba(1957)}]{Lomnicki1957}
\bibinfo{author}{Lomnicki, Z.~A.}, \& \bibinfo{author}{Zaremba, S.~K.}
  (\bibinfo{year}{1957}).
\newblock \bibinfo{title}{On estimating the spectral density function of a
  stochastic process}.
\newblock {\it \bibinfo{journal}{Journal of the Royal Statistical Society.
  Series B (Methodological)}\/},  {\it \bibinfo{volume}{19}\/},
  \bibinfo{pages}{13--37}.
\bibitem[{Parzen(1957)}]{Parzen1957}
\bibinfo{author}{Parzen, E.} (\bibinfo{year}{1957}).
\newblock \bibinfo{title}{On consistent estimates of the spectrum of a
  stationary time series}.
\newblock {\it \bibinfo{journal}{The Annals of Mathematical Statistics}\/},
  {\it \bibinfo{volume}{28}\/}, \bibinfo{pages}{329--348}.
\bibitem[{Priestley(1982)}]{priestley1982spectral}
\bibinfo{author}{Priestley, M.} (\bibinfo{year}{1982}).
\newblock {\it \bibinfo{title}{Spectral Analysis and Time Series}\/}.
\newblock Number \bibinfo{number}{v. 1-2} in \bibinfo{series}{Probability and
  mathematical statistics : a series of monographs and texbooks}.
\newblock \bibinfo{publisher}{Academic Press}.
\bibitem[{Rudin(1987)}]{rudinreal}
\bibinfo{author}{Rudin, W.} (\bibinfo{year}{1987}).
\newblock {\it \bibinfo{title}{Real and Complex Analysis. Third Edition}\/}.
\newblock \bibinfo{publisher}{McGraw-Hill Book Company}.
\bibitem[{S\"{o}derstr\"{o}m \& Stoica(1989)}]{SODERSTROM_STOICA_1988}
\bibinfo{author}{S\"{o}derstr\"{o}m, T.}, \& \bibinfo{author}{Stoica, P.}
  (\bibinfo{year}{1989}).
\newblock {\it \bibinfo{title}{System Identification}\/}.
\newblock \bibinfo{address}{Hemel Hempstead, UK}:
  \bibinfo{publisher}{Prentice-Hall International}.
\bibitem[{Stoica \& Moses(2005)}]{stoica2005spectral}
\bibinfo{author}{Stoica, P.}, \& \bibinfo{author}{Moses, R.~L.}
  (\bibinfo{year}{2005}).
\newblock {\it \bibinfo{title}{Spectral analysis of signals}\/}
  volume~\bibinfo{volume}{1}.
\newblock \bibinfo{publisher}{Pearson Prentice Hall Upper Saddle River, NJ}.
\bibitem[{Zhu et~al.(2019)Zhu, Ferrante, Karlsson \& Zorzi}]{9029655}
\bibinfo{author}{Zhu, B.}, \bibinfo{author}{Ferrante, A.},
  \bibinfo{author}{Karlsson, J.}, \& \bibinfo{author}{Zorzi, M.}
  (\bibinfo{year}{2019}).
\newblock \bibinfo{title}{Fusion of sensors data in automotive radar systems: A
  spectral estimation approach}.
\newblock In {\it \bibinfo{booktitle}{IEEE Conference on Decision and Control
  (CDC)}\/} (pp. \bibinfo{pages}{5088--5093}).
\bibitem[{Zhu et~al.(2021{\natexlab{a}})Zhu, Ferrante, Karlsson \&
  Zorzi}]{ZHU2021}
\bibinfo{author}{Zhu, B.}, \bibinfo{author}{Ferrante, A.},
  \bibinfo{author}{Karlsson, J.}, \& \bibinfo{author}{Zorzi, M.}
  (\bibinfo{year}{2021}{\natexlab{a}}).
\newblock \bibinfo{title}{M$^2$ -spectral estimation: A relative entropy
  approach}.
\newblock {\it \bibinfo{journal}{Automatica}\/},  {\it
  \bibinfo{volume}{125}\/}, \bibinfo{pages}{109404}.
\bibitem[{Zhu et~al.(2021{\natexlab{b}})Zhu, Ferrante, Karlsson \&
  Zorzi}]{zhu_SIAM}
\bibinfo{author}{Zhu, B.}, \bibinfo{author}{Ferrante, A.},
  \bibinfo{author}{Karlsson, J.}, \& \bibinfo{author}{Zorzi, M.}
  (\bibinfo{year}{2021}{\natexlab{b}}).
\newblock \bibinfo{title}{M$^2$ spectral estimation: A flexible approach
  ensuring rational solutions}.
\newblock {\it \bibinfo{journal}{SIAM Journal on Control and Optimization}\/},
  {\it \bibinfo{volume}{59}\/}, \bibinfo{pages}{2977--2996}.
\bibitem[{Zorzi(2014)}]{BETA}
\bibinfo{author}{Zorzi, M.} (\bibinfo{year}{2014}).
\newblock \bibinfo{title}{{A new family of high-resolution multivariate
  spectral estimators}}.
\newblock {\it \bibinfo{journal}{IEEE Trans. Autom. Control}\/},  {\it
  \bibinfo{volume}{59}\/}, \bibinfo{pages}{892--904}.
\bibitem[{Zorzi(2020)}]{zorzi2020autoregressive}
\bibinfo{author}{Zorzi, M.} (\bibinfo{year}{2020}).
\newblock \bibinfo{title}{Autoregressive identification of kronecker graphical
  models}.
\newblock {\it \bibinfo{journal}{Automatica}\/},  {\it
  \bibinfo{volume}{119}\/}, \bibinfo{pages}{109053}.

\end{thebibliography}
\end{document}